\definecolor{mahogany}{cmyk}{0, 0.77, 0.87, 0}
\definecolor{salmon}{cmyk}{0, 0.53, 0.38, 0}
\definecolor{melon}{cmyk}{0, 0.46, 0.50, 0}
\definecolor{yellowgreen}{cmyk}{0.44, 0, 0.74, 0}
\definecolor{brickred}{cmyk}{0, 0.89, 0.94, 0.28}
\definecolor{OliveGreen}{cmyk}{0.64, 0, 0.95, 0.40}
\definecolor{RawSienna}{cmyk}{0, 0.72, 1.0, 0.45}
\definecolor{ZurichRed}{rgb}{1, 0, 0} 
\begin{document}

\newtheorem{lemma}[thm]{Lemma}

\newtheorem{proposition}{Proposition}
\newtheorem{theorem}{Theorem}[section]
\newtheorem{deff}[thm]{Definition}
\newtheorem{case}[thm]{Case}
\newtheorem{prop}[thm]{Proposition}
\newtheorem{example}{Example}

\newtheorem{corollary}{Corollary}

\theoremstyle{definition}
\newtheorem{remark}{Remark}

\numberwithin{equation}{section}
\numberwithin{definition}{section}
\numberwithin{corollary}{section}

\numberwithin{theorem}{section}

\numberwithin{remark}{section}
\numberwithin{example}{section}
\numberwithin{proposition}{section}

\newcommand{\gap}{\lambda_{2,D}^V-\lambda_{1,D}^V}
\newcommand{\gapR}{\lambda_{2,R}-\lambda_{1,R}}
\newcommand{\bD}{\mathrm{I\! D\!}}
\newcommand{\calA}{\mathcal{A}}
\newcommand{\calD}{\D}

\newcommand{\conjugate}[1]{\overline{#1}}
\newcommand{\abs}[1]{\left| #1 \right|}
\newcommand{\cl}[1]{\overline{#1}}
\newcommand{\expr}[1]{\left( #1 \right)}
\newcommand{\set}[1]{\left\{ #1 \right\}}

\newcommand{\calC}{\mathcal{C}}
\newcommand{\calE}{\mathcal{E}}
\newcommand{\calF}{\mathcal{F}}
\newcommand{\Rd}{\mathbb{R}^d}
\newcommand{\BR}{\mathcal{B}(\Rd)}
\newcommand{\R}{\mathbb{R}}
\newcommand{\al}{\alpha}
\newcommand{\RR}[1]{\mathbb{#1}}
\newcommand{\bR}{\mathrm{I\! R\!}}
\newcommand{\ga}{\gamma}
\newcommand{\om}{\omega}
\newcommand{\A}{\mathbb{A}}
\newcommand{\bH}{\mathbb{H}}

\newcommand{\bb}[1]{\mathbb{#1}}
\newcommand{\bI}{\bb{I}}
\newcommand{\bN}{\bb{N}}

\newcommand{\uS}{\mathbb{S}}
\newcommand{\M}{{\mathcal{M}}}
\newcommand{\calB}{{\mathcal{B}}}

\newcommand{\W}{{\mathcal{W}}}

\newcommand{\m}{{\mathcal{m}}}

\newcommand {\mac}[1] { \mathbb{#1} }

\newcommand{\bC}{\Bbb C}
\newcommand{\D}{\mathbb{D}}

\newtheorem{rem}[theorem]{Remark}
\newtheorem{dfn}[theorem]{Definition}
\theoremstyle{definition}
\newtheorem{ex}[theorem]{Example}
\numberwithin{equation}{section}

\newcommand{\Pro}{\mathbb{P}}
\newcommand\F{\mathcal{F}}
\newcommand\E{\mathbb{E}}
\newcommand\e{\varepsilon}
\def\H{\mathcal{H}}
\def\t{\tau}

\newcommand{\blankbox}[2]{%
  \parbox{\columnwidth}{\centering
    \setlength{\fboxsep}{0pt}%
    \fbox{\raisebox{0pt}[#2]{\hspace{#1}}}%
  }%
}
\subjclass[2010]{60G40, 60G44, 31B05}

\keywords{Martingale, Supermartingale, maximal, harmonic function, best constant}

\title[Martingale inequalities]
 {Sharp maximal $L^p$-estimates for martingales}

\author{Rodrigo Ba\~nuelos}\thanks{R. Ba\~nuelos is supported in part  by NSF Grant
\# 0603701-DMS}
\address{Department of Mathematics, Purdue University, West Lafayette, IN 47907, USA}
\email{banuelos@math.purdue.edu}
\author{Adam Os\c ekowski}
\address{Department of Mathematics, Informatics and Mechanics, University of Warsaw, Banacha 2, 02-097 Warsaw, Poland}
\email{ados@mimuw.edu.pl}
\thanks{A. Os\c ekowski is supported in part by NCN grant DEC-2012/05/B/ST1/00412.}

\begin{abstract}
Let $X$ be a supermartingale starting from $0$ which has only nonnegative jumps. For each $0<p<1$  we determine the best constants $c_p$, $C_p$ and $\mathfrak{c}_p$ such that
$$ \,\,\,\,\sup_{t\geq 0}\left|\left|X_t\right|\right|_p\leq C_p\left|\left|-\inf_{t\geq 0}X_t\right|\right|_p,$$
$$ \,\,||\sup_{t\geq 0}X_t||_p\leq c_p\left|\left|-\inf_{t\geq 0}X_t\right|\right|_p$$
and
$$ ||\sup_{t\geq 0}|X_t|\;||_p\leq \mathfrak{c}_p\left|\left|-\inf_{t\geq 0}X_t\right|\right|_p.$$
The estimates are shown to be sharp if $X$ is assumed to be a stopped one-dimensional Brownian motion. 
The inequalities are deduced from the existence of special functions, enjoying certain majorization and convexity-type properties. Some applications concerning harmonic functions on Euclidean domains are indicated.
\end{abstract}

\maketitle

\section{Introduction}  

Suppose that $(\Omega,\F,\mathbb{P})$ is a complete probability space, filtered by $(\F_t)_{t\geq 0}$, a nondecreasing family of sub-$\sigma$-algebras of $\F$, such that $\F_0$ contains all the events of probability $0$. Assume further that $X=(X_t)_{t\geq 0}$ is a martingale on this filtration  with right-continuous trajectories that have limits from the left. Define the associated supremum, infimum and both-sided supremum processes $M^+=(M^+_t)_{t\geq 0}$, $M^-=(M^-_{t})_{t\geq 0}$ and $M=(M_t)_{t\geq 0}$ by the formulas
$$ M^+_t=\sup_{0\leq s\leq t} X_s\vee 0,\qquad M^-_{t}=\inf_{0\leq s\leq t} X_s\wedge 0$$
and
$$  M_t=\sup_{0\leq s\leq t}|X_t|=M_t^-\vee M_t^+,$$
where, as usual, $a\vee b=\max\{a,b\}$ and $a\wedge b=\min\{a,b\}$.  
We will also use the notation $\Delta X_t$ for $X_t-X_{t-}$, the jump of $X$ at time $t$ (we assume that $X_{0-}=0$ almost surely). 

The inequalities involving various sizes of $X$, $M^+$, $M^-$ and $M$ have played an important role in probability, especially in the theory of stochastic processes and stochastic integration. For instance, recall the classical result of Doob \cite{Do}: we have
$$ ||M||_p\leq \frac{p}{p-1}||X||_p,\qquad 1<p<\infty, $$ 
and the constant $p/(p-1)$ is the best possible, even in the weaker estimates
$$ ||M^+||_p\leq \frac{p}{p-1}||X||_p,\quad ||M^-||_p\leq \frac{p}{p-1}||X||_p.$$
Here and below, we will use the convention $||Y||_p=\sup_{t\geq 0}\left(\E |Y_t|^p\right)^{1/p}$ for any semimartingale $Y=(Y_t)_{t\geq 0}$. 
For $p=1$, the above $L^p$ bound does not hold with any finite constant, but we have the following sharp LlogL estimate (see Gilat \cite{Gi} and Peskir \cite{Pe}): for any $K>1$,
$$ ||M||_1 \leq K \sup_{t\geq 0}\E |X_t|\log^+|X_t|+L(K),$$
where $ L(K)=1+(e^K(K-1))^{-1}$ is the best possible. There are many versions of these results, and we refer the interested reader to the monograph \cite{PS} by Peskir and Shiryaev for an overview, presenting the subject from the viewpoint of optimal stopping theory.

We will be particularly interested in sharp $L^p$ bounds involving $X$, $M^+$, $M^-$ and $M$ in the case $0<p<1$. It is well-known that in general such estimates do not hold with any finite constants unless we assume some additional regularity of the trajectories or the distribution such as continuity, conditional symmetry or nonnegativity. For instance, we have the following result, proved by Shao in \cite{Shao}.

\begin{theorem}
Suppose that $X$ is a nonnegative martingale. Then for any $0<p<1$ we have the sharp bound
$$ ||M||_p\leq \frac{1}{(1-p)^{1/p}}||X||_p.$$
\end{theorem}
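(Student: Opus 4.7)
The plan is to apply the Burkholder method of special functions. I would introduce
$$
U(x,y) \;=\; y^p \;+\; \frac{p}{1-p}\, y^{p-1}\, x, \qquad 0\leq x\leq y,
$$
which satisfies the pointwise majorization $U(x,y)\geq y^p$ on its domain (every factor in the second summand is nonnegative) and the diagonal value $U(y,y) = y^p/(1-p)$. The main claim is that $(U(X_t,M_t))_{t\geq 0}$ is a supermartingale whenever $X$ is a nonnegative c\`adl\`ag martingale and $M$ is its running supremum. Granted this, we obtain
$$
\mathbb{E} M_t^p \;\leq\; \mathbb{E} U(X_t,M_t) \;\leq\; \mathbb{E} U(X_0,M_0) \;=\; \frac{\mathbb{E} X_0^p}{1-p}.
$$
Since $x\mapsto x^p$ is concave, conditional Jensen shows that $(X_t^p)$ is a supermartingale, hence $\|X\|_p^p = \sup_t \mathbb{E} X_t^p = \mathbb{E} X_0^p$, and taking the supremum over $t$ on the left yields $\|M\|_p \leq (1-p)^{-1/p}\|X\|_p$ as desired.

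The crucial design feature of $U$ is its linearity in $x$ for each fixed $y$. This eliminates the second-derivative term from It\^o's formula, and it makes the convexity correction at jumps of $X$ that do not alter $M$ trivially vanish. What remains is (i) the continuous-reflection contribution at times when $X = M$, for which a direct calculation gives $\partial_y U(y,y) = p y^{p-1} - p y^{p-1} = 0$, and (ii) the ``pure $M$-jumps'' at times $s$ with $X_s > M_{s-}$, at which the contribution equals $M_{s-}^p (1-p)^{-1} f(X_s/M_{s-})$ with
$$
f(t) \;:=\; t^p - pt - (1-p), \qquad t\geq 1.
$$
The nonpositivity $f\leq 0$ on $[1,\infty)$ is immediate from $f(1)=0$ and $f'(t) = p(t^{p-1}-1) \leq 0$. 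Consequently $U(X_t,M_t) - U(X_0,M_0)$ decomposes as a local martingale plus a nonincreasing process, and since $U\geq 0$ a standard localization upgrades this to a genuine supermartingale.

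Sharpness I would establish by taking $X$ to be a one-dimensional Brownian motion started at some $x_0>0$ and stopped at its first hitting time of $0$: this is a continuous nonnegative martingale with $\|X\|_p = x_0$, and the gambler's-ruin identity $\mathbb{P}(M_\infty > m) = x_0/m$ for $m\geq x_0$ gives
$$
\mathbb{E} M_\infty^p \;=\; x_0^p \;+\; p\int_{x_0}^\infty \lambda^{p-1}\,\frac{x_0}{\lambda}\,d\lambda \;=\; \frac{x_0^p}{1-p},
$$
so $\|M\|_p = (1-p)^{-1/p}\|X\|_p$ and the constant is best possible.

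The main obstacle is the rigorous verification of the supermartingale property in the presence of general c\`adl\`ag jumps; however, the linearity of $U$ in $x$ reduces everything to the one-variable inequality $f(t)\leq 0$ for $t\geq 1$, so no genuinely hard analysis is required beyond the elementary computation of $\partial_y U$ on the diagonal and the monotonicity of $f$.
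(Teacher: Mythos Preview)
The paper does not contain its own proof of this statement; it is quoted from Shao as background for the paper's main results, so there is nothing to compare against directly. Your argument is correct and is precisely in the spirit of the special-function method the paper employs for Theorems~1.3--1.5: a function $U$ linear in the martingale coordinate (so that $U_{xx}\equiv 0$ and jumps of $X$ that leave $M$ unchanged contribute nothing), satisfying the diagonal condition $U_y(y,y)=0$ (so that the reflection integral $\int U_y\,dM^c$ vanishes), with the residual ``$M$-jump'' contribution controlled by the elementary inequality $t^p-pt-(1-p)\le 0$ for $t\ge 1$. Your computation of that jump term, $\tfrac{M_{s-}^p}{1-p}f(X_s/M_{s-})$, is correct, and the passage from nonnegative local supermartingale to supermartingale via Fatou is standard. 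The stopped-Brownian-motion sharpness example is the classical one and gives exactly $\|M\|_p^p=(1-p)^{-1}x_0^p$. The only point left implicit is the singularity of $U$ at $y=0$; this is harmless since on $\{X_0=0\}$ a nonnegative martingale is identically zero, or one may regularize by replacing $M$ with $M\vee\varepsilon$ and letting $\varepsilon\downarrow 0$, exactly as the paper does in \S2.
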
 

We will be interested in a slightly different class of estimates. 
Motivated by the study Hardy's  $H^p$ spaces for harmonic functions on the upper half-space $\R^{n+1}_{+}$,  Burkholder  obtained the following result.  

\begin{theorem}[Burkholder \cite{Bur1}]\label{Burkholder}
Suppose that $X$ is a martingale with continuous sample paths with $X_0=0$.  
  If $\Phi$ is a nondecreasing continuous function on $[0,\infty)$ such that $\Phi(0)=0$ and $\Phi(\beta\lambda)\leq \gamma\Phi(\lambda)$, for some $\beta>\gamma>1$ and all $\lambda>0$, then 
\begin{equation}\label{burk1}
\sup_{t\geq 0}\E\Phi(M_t)\leq C\sup_{t\geq 0}\E\Phi(-M_t^{-}),
\end{equation}
where $C$ depends only on $\beta$ and $\gamma$.  In particular, if $0<p<1$, then
\begin{equation}\label{burk2}
\|M\|_p\leq C_p\|M^{-}\|_p.
\end{equation} 
\end{theorem}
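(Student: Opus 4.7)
The plan is to reduce the inequality to a \emph{good-$\lambda$} estimate comparing the tails of $M$ and $-M^{-}$, and then to integrate against $d\Phi$, exploiting the doubling-type hypothesis $\Phi(\beta\lambda)\le\gamma\Phi(\lambda)$ together with $\beta>\gamma$ to absorb an error term on the right-hand side. This is the classical Burkholder--Gundy good-$\lambda$ approach; the continuity of $X$ is critical.

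\textbf{Step 1: a good-$\lambda$ inequality.} Fix a finite horizon $T>0$ (one then lets $T\uparrow\infty$) and a small parameter $\delta>0$ to be chosen later. For $\lambda>0$ define the stopping times
\begin{equation*}
\tau=\inf\{t\ge 0:X_t\ge\lambda\}\wedge T,\qquad \sigma=\inf\{t\ge\tau:X_t\notin(-\delta\lambda,\beta\lambda)\}\wedge T.
\end{equation*}
Continuity of $X$ forces $X_\tau=\lambda$ on $\{\tau<T\}=\{M_T\ge\lambda\}$ and $X_\sigma\in\{-\delta\lambda,\beta\lambda\}$ on $\{\sigma<T\}$; the stopped process $X^{\sigma}$ is uniformly bounded by $\beta\lambda$ after time $\tau$. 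Optional stopping gives $\E[X_\sigma\mid \F_\tau]=\lambda$ on $\{\tau<T\}$, and combining with $X_\sigma\ge-\delta\lambda$ yields $\Pro(X_\sigma=\beta\lambda\mid\F_\tau)\le (1+\delta)/(\beta+\delta)$ there. Since $\{M_T>\beta\lambda,\,-M_T^{-}<\delta\lambda\}\subseteq\{X_\sigma=\beta\lambda\}\cap\{\tau<T\}$, taking expectations gives
\begin{equation*}
\Pro\bigl(M_T>\beta\lambda,\,-M_T^{-}<\delta\lambda\bigr)\le \frac{1+\delta}{\beta+\delta}\,\Pro(M_T\ge\lambda).
\end{equation*}

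\textbf{Step 2: integration and absorption.} From the trivial inclusion $\{M_T>\beta\lambda\}\subseteq\{M_T>\beta\lambda,\,-M_T^{-}<\delta\lambda\}\cup\{-M_T^{-}\ge\delta\lambda\}$ and the layer-cake identity $\E\Phi(Y)=\int_0^\infty\Pro(Y>\lambda)\,d\Phi(\lambda)$, the substitutions $\lambda\mapsto\lambda/\beta$ and $\lambda\mapsto\lambda/\delta$ produce
\begin{equation*}
\E\Phi(M_T/\beta)\le \frac{1+\delta}{\beta+\delta}\,\E\Phi(M_T)+\E\Phi(-M_T^{-}/\delta).
\end{equation*}
The doubling hypothesis $\Phi(\beta\lambda)\le\gamma\Phi(\lambda)$ rearranges to $\Phi(y)\le\gamma\Phi(y/\beta)$ and, iterated $k=\lceil\log_\beta(1/\delta)\rceil$ times, to $\Phi(-M_T^{-}/\delta)\le\gamma^{k}\Phi(-M_T^{-})$. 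Substituting,
\begin{equation*}
\E\Phi(M_T)\le \gamma\cdot\frac{1+\delta}{\beta+\delta}\,\E\Phi(M_T)+\gamma^{k+1}\,\E\Phi(-M_T^{-}).
\end{equation*}
Because $\gamma/\beta<1$, the coefficient $\gamma(1+\delta)/(\beta+\delta)\to\gamma/\beta$ as $\delta\downarrow 0$, so $\delta$ can be fixed small enough to make it strictly less than $1$. Absorbing the first term on the right yields $\E\Phi(M_T)\le C\,\E\Phi(-M_T^{-})$ with $C=C(\beta,\gamma)$. Both sides being monotone in $T$, the supremum over $T$ gives \eqref{burk1}; the choice $\Phi(\lambda)=\lambda^{p}$, $\gamma=\beta^{p}$, recovers \eqref{burk2}.

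\textbf{Main obstacle.} The crux is the balancing act in Step 2: $\delta$ must be small enough to ensure $\gamma(1+\delta)/(\beta+\delta)<1$, yet this forces the doubling factor $\gamma^{k+1}\asymp(1/\delta)^{\log\gamma/\log\beta}$ to be large. The hypothesis $\beta>\gamma$ is exactly what makes both constraints compatible --- if $\beta=\gamma$ the absorption step fails and $C$ blows up. A secondary technical point is the use of optional stopping on $X^{\sigma}$, where continuity is essential in order to pin $X_\tau=\lambda$ and $X_\sigma\in\{-\delta\lambda,\beta\lambda\}$ exactly rather than only as one-sided inequalities; any appreciable overshoot would degrade the ratio $(1+\delta)/(\beta+\delta)$.
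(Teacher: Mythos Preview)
The paper does not prove this theorem at all; it is quoted as a result of Burkholder \cite{Bur1}, and the paper merely remarks that ``Burkholder's proof of \eqref{burk1} uses good-$\lambda$ inequalities.'' Your proposal reconstructs precisely that good-$\lambda$ argument, so there is nothing to compare against in the paper beyond the fact that your method coincides with the one the authors attribute to Burkholder.

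Your sketch is essentially correct, with two small points worth tightening. First, the identification $\{\tau<T\}=\{M_T\ge\lambda\}$ is not literally true (it is $\{M_T^{+}\ge\lambda\}$, since $\tau$ only detects upcrossings of $\lambda$); fortunately only the inclusion $\{\tau<T\}\subseteq\{M_T\ge\lambda\}$ is used, so the displayed good-$\lambda$ bound survives. Second, the absorption step
\[
\E\Phi(M_T)\le \gamma\,\frac{1+\delta}{\beta+\delta}\,\E\Phi(M_T)+\gamma^{k+1}\E\Phi(-M_T^{-})
\]
requires $\E\Phi(M_T)<\infty$ in order to subtract the first term from both sides. The standard fix---replacing $M_T$ by $M_T\wedge N$ throughout (the good-$\lambda$ inequality only improves) and letting $N\uparrow\infty$ at the end---should be mentioned. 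With these adjustments the argument is complete and matches the approach the paper credits to Burkholder.
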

As Burkholder points out (see his Example 6.3 in \cite{Bur1}), by stopping Brownian motion at the first time it hits $-1$, it follows that \eqref{burk2} does not hold for $p\geq 1$.

Burkholder's proof of \eqref{burk1} uses good-$\lambda$ inequalities.  Over the years other proofs of \eqref{burk2} have been given, including the recent one in \cite{Sch} which is written in terms of the functions $M^+$ and $M^-$.   The inequality in \cite{Sch} is applied to prove a stochastic Gronwall Lemma. 
   The goal of this paper is to obtain the best constant in \eqref{burk2} and its variant proved in \cite{Sch}.  Actually, we will go much further and study a wider class of processes: our reasoning will enable us to obtain sharp estimates for supermartingales which do not have negative jumps. 
In the formulation of our main results, we will need some additional constants. A straightforward analysis of a derivative shows that there is unique $p_0\in (0,1)$ for which 
\begin{equation}\label{defp0}
 p_0^{-1}-1=\left(p_0^{-1}-1\right)^{1-p_0}+1.
 \end{equation}
Computer simulations show that $p_0\simeq 0.1945\ldots$. Now, if $p\in (0,p_0]$, let
\begin{equation}\label{defalfa0}
 \alpha_p=\left(\frac{1-p}{p}\right)^{1-p}\qquad \mbox{and}\qquad C_p=\frac{1-p}{p}.
 \end{equation}
On the other hand, if $p\in (p_0,1)$, let $\alpha_p$ be the unique solution to the equation
\begin{equation}\label{defalfa}
(1-p)\left(\alpha_p^{1/(1-p)}+1\right)=\alpha_p+1
\end{equation}
and set $C_p=(1+\alpha_p^{-1})^{1/p}$. Next, for any $p\in (0,1)$, let
$$ c_p=\left(\left(\frac{1}{p}-1\right)^p+\int_{p^{-1}-1}^\infty \frac{s^{p-1}}{s+1}\mbox{d}s\right)^{1/p}.$$
Finally, introduce the constant $\mathfrak{c}_p$ by
$$ \mathfrak{c}_p=\begin{cases}
\displaystyle \left(\left(\frac{1}{p}-1\right)^p+\int_{p^{-1}-1}^\infty \frac{s^{p-1}}{s+1}\mbox{d}s\right)^{1/p} & \mbox{if }0<p\leq 1/2,\\
\displaystyle \left(1+\int_1^\infty \frac{s^{p-1}}{s+1}\mbox{d}s\right)^{1/p} & \mbox{if }1/2<p<1.
\end{cases}$$
Observe that $\mathfrak{c}_p=c_p$ for $0<p\leq 1/2$; on the other hand, when $p\in (1/2,1)$, the constant $\mathfrak{c}_p$ is easily seen to be larger (which will also be clear from the reasoning below). 

We are ready to state the results.  The first theorem gives a sharp comparison of $L^p$ norms of a supermartingale and its infimum process. 

\begin{theorem}\label{theorem1} 
Suppose that $X$ is an adapted supermartingale with only nonnegative jumps, satisfying $X_0=0$ almost surely. Then for any $0<p<1$ we have
\begin{equation}\label{mainin0}
 ||X||_p\leq C_p||M^-||_p
\end{equation}
and the constant $C_p$ is the best possible. It is already the best possible if $X$ is assumed to be a stopped Brownian motion.
\end{theorem}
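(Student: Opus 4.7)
I follow the Burkholder method and derive \eqref{mainin0} from the existence of a special function $U$ on the domain $D = \{(x,y) \in \mathbb{R}\times(-\infty,0] : y \leq x\}$ satisfying: (a) the majorization $U(x,y) \geq |x|^p - C_p^p(-y)^p$; (b) the initial condition $U(0,0) \leq 0$; (c) for each fixed $y \leq 0$, concavity and monotonicity of $U(\cdot,y)$ on $[y,\infty)$; and (d) along the diagonal $\{x=y\}$, $\partial_y U \geq 0$. Given such a $U$, an application of the change-of-variables formula to $U(X_t, M^-_t)$ makes that process a local supermartingale: the quadratic-variation term $\tfrac{1}{2}\int \partial_{xx} U\, d\langle X^c\rangle$ is $\leq 0$ by concavity; the drift term $\int \partial_x U\, dA_s$ is $\leq 0$ since $\partial_x U \geq 0$ and the supermartingale compensator $A$ is nonincreasing; the boundary term $\int \partial_y U\, dM^-_s$ is $\leq 0$ because $M^-$ is nonincreasing and $\partial_y U \geq 0$ where $M^-$ actually decreases (namely, on the diagonal); and each nonnegative jump of $X$ contributes $U(X_{s-}+\Delta X_s, M^-_{s-}) - U(X_{s-}, M^-_{s-}) - \partial_x U(X_{s-}, M^-_{s-})\,\Delta X_s \leq 0$ once more by concavity. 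A standard localization then yields $\mathbb{E}|X_t|^p - C_p^p\,\mathbb{E}(-M^-_t)^p \leq U(0,0) \leq 0$, and taking the supremum over $t$ followed by a $p$-th root gives \eqref{mainin0}.

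\medskip

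\textbf{Construction of $U$.} Since both sides of \eqref{mainin0} are homogeneous of degree $p$ under $(x,y) \mapsto (\lambda x, \lambda y)$ for $\lambda > 0$, the natural ansatz is $U(x,y) = (-y)^p\, u(x/(-y))$ with $u$ defined on $[-1,\infty)$. Conditions (a)--(d) translate respectively into: $u(t) \geq |t|^p - C_p^p$; $u(0) \geq 0$; $u$ concave and nondecreasing on $[-1,\infty)$; and the Neumann-type identity $u'(-1) + p\,u(-1) = 0$. I would seek $u$ as a piecewise function equal to a multiple of $(t+1)$ or $(t+1)^p$ on an initial interval $[-1, t_\ast]$ and equal to $t^p - C_p^p$ on $[t_\ast, \infty)$, glued $C^1$ at $t_\ast$; the matching conditions, combined with the Neumann condition at $-1$, produce algebraic relations on the parameters that coincide exactly with \eqref{defalfa0} and \eqref{defalfa}. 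The dichotomy at $p_0$ arises as the critical exponent at which the interior root of \eqref{defalfa} ceases to be admissible and the gluing point $t_\ast$ is forced to the boundary, producing the closed-form $C_p = (1-p)/p$. I expect the principal technical obstacle to be the pointwise verification of (a) away from the interval of equality, a one-variable inequality that must be handled separately in the ranges $p \in (0, p_0]$ and $p \in (p_0, 1)$.

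\medskip

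\textbf{Sharpness.} The extremal is produced by tracing equality through each step of the supermartingale reduction. For standard Brownian motion $B$ started at $0$, set $\tau_a = \inf\{s : B_s \notin (-1, a)\}$ and $X^{(a)}_t = B_{t\wedge \tau_a}$. The joint distribution of $(B_{\tau_a}, \min_{s\leq \tau_a} B_s)$ is explicit by gambler's ruin: the minimum has continuous density $a/(a+m)^2$ on $[0,1]$ together with an atom of mass $a/(a+1)$ at $m=1$, and $\mathbb{P}(B_{\tau_a} = a) = 1/(a+1)$. Computing $\|X^{(a)}\|_p$ and $\|M^-\|_p$ from these formulas and optimizing the ratio over the parameter $a$ yields a maximizer $a = a_p$ satisfying \eqref{defalfa} when $p \in (p_0, 1)$, and a degenerate optimizer $a \to \infty$ when $p \in (0, p_0]$; in both cases the value of the ratio matches $C_p$, proving sharpness within the class of stopped Brownian motions.
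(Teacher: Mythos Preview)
Your plan for the inequality \eqref{mainin0} is essentially the same as the paper's: a special function of $(X_t,M^-_t)$ combined with It\^o's formula. Two remarks on details. First, the paper's $U$ is globally \emph{linear} in $x$, namely $U(x,z)=\alpha_p^{-1}(-z)^{p-1}[px-(p-1)z]$; in your homogeneous coordinates this is $u(t)=\alpha_p^{-1}(pt+p-1)$. Note that this is \emph{not} a multiple of $(t+1)$: your Neumann condition $u'(-1)+pu(-1)=0$ applied to $u(t)=c(t+1)$ forces $c=0$, and $(t+1)^p$ is not $C^1$ at $t=-1$. The linear $u$ satisfying the Neumann condition is necessarily proportional to $t-(1/p-1)$, and then the majorization $u(t)\ge |t|^p-C_p^p$ is checked by tangency at $t=\alpha_p^{1/(1-p)}$ together with the endpoint inequality $u(-1)\ge 1-C_p^p$, the latter being exactly where the dichotomy at $p_0$ enters. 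Second, since $U$ is singular at $z=0$ you need the $\varepsilon$-regularization $M^-_t\wedge(-\varepsilon)$ before applying It\^o; the paper does this and lets $\varepsilon\to 0$ at the end.

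The real gap is in your sharpness argument. A single two-barrier stopping time $\tau_a=\inf\{t:B_t\notin(-1,a)\}$ does \emph{not} produce the constant $C_p$. For instance, for $p\le p_0$ you claim the optimizer is the degenerate limit $a\to\infty$, i.e.\ Brownian motion stopped at $\tau_{-1}=\inf\{t:B_t=-1\}$; but then $-M^-_\infty=1$ and $X_\infty=-1$, and one can check that $\sup_t\E|B_{t\wedge\tau_{-1}}|^p<C_p^p$. More concretely, for $p=1/2$ (where $C_{1/2}=2$) a direct computation of your proposed ratio at the terminal time gives
\[
\frac{\E|B_{\tau_a}|^{1/2}}{\E(-M^-_{\tau_a})^{1/2}}=\frac{(\sqrt{a}+a)/(a+1)}{\sqrt{a}\,\arctan(1/\sqrt{a})},
\]
which is maximized near $a\approx 3$ at a value around $1.30$, well below $C_{1/2}^{1/2}=\sqrt{2}\approx 1.414$; taking the supremum over $t$ does not close this gap. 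The paper's extremal is substantially more delicate: it is a \emph{cascade} of stopping times $\tau_0<\tau_1<\cdots$ at geometrically growing barriers $\{-(1+\delta)^n,\beta(1+\delta)^n\}$, designed so that the stopped process satisfies $|X_\eta|=\beta(-M^-_\eta)$ pointwise (for $p\le p_0$), whence the ratio of $p$-th moments is exactly $\beta^p$ and one lets $\beta\uparrow p^{-1}-1$. For $p>p_0$ the cascade is truncated after $N$ steps and a double limit $N\to\infty$, $\delta\to 0$ is taken. This iterated structure is what forces the process to repeatedly visit the set where the majorization $U(x,z)\ge |x|^p-C_p^p(-z)^p$ is an equality; a single exit time cannot achieve that.
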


The second result compares the sizes of the supremum and the infimum processes.

\begin{theorem}\label{theorem2} 
Suppose that $X$ is an adapted supermartingale with only nonnegative jumps, satisfying $X_0=0$ almost surely. Then for any $0<p<1$ we have
\begin{equation}\label{mainin}
 ||M^+||_p\leq c_p||M^-||_p
\end{equation}
and the constant $c_p$ is the best possible. It is already the best possible if $X$ is assumed to be a stopped Brownian motion.
\end{theorem}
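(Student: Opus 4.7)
The plan is to invoke Burkholder's method of special functions. Let $D=\{(x,y,z)\in\mathbb{R}^{3}:\ z\le 0\le y,\ z\le x\le y\}$. The central task is to construct a function $U:D\to\mathbb{R}$ with three properties: \textbf{(a)} majorization, $U(x,y,z)\ge y^{p}-c_{p}^{p}(-z)^{p}$ on $D$; \textbf{(b)} the initial bound $U(0,0,0)\le 0$; and \textbf{(c)} the supermartingale property, that $(U(X_{t},M^{+}_{t},M^{-}_{t}))_{t\ge 0}$ is a supermartingale for every supermartingale $X$ with $X_{0}=0$ and only nonnegative jumps. Once such a $U$ is in hand the theorem follows by taking expectations: $\mathbb{E}(M^{+}_{t})^{p}-c_{p}^{p}\,\mathbb{E}(-M^{-}_{t})^{p}\le \mathbb{E}\,U(X_{t},M^{+}_{t},M^{-}_{t})\le U(0,0,0)\le 0$, whence $\|M^{+}\|_{p}\le c_{p}\|M^{-}\|_{p}$ after passing to the limit in $t$.

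To find $U$ I would interpret it as (a suitable modification of) the value function of the associated optimal stopping problem for Brownian motion,
\[
V(x,y,z)=\sup_{\tau}\mathbb{E}_{(x,y,z)}\bigl[(M^{+}_{\tau})^{p}-c_{p}^{p}(-M^{-}_{\tau})^{p}\bigr],
\]
with $c_{p}$ determined by the normalization $V(0,0,0)=0$. On each slice $\{y=\mathrm{const},\ z=\mathrm{const}\}$ the candidate $U$ must be linear in $x$ (the only bounded $x$-harmonic function on $[z,y]$) in the continuation region and must coincide with the obstacle $\Phi(y,z)=y^{p}-c_{p}^{p}(-z)^{p}$ in the stopping region, subject to the Neumann-type matching conditions $\partial_{y}U|_{x=y}=0$ and $\partial_{z}U|_{x=z}=0$ where the extrema are active. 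Exploiting the scaling symmetry in $(y,-z)$, the free-boundary problem reduces to an ODE whose solution is an explicit piecewise formula for $U$; the structural break occurs at the distinguished level $a:=p^{-1}-1$, and the normalization $V(0,0,0)=0$ forces precisely $c_{p}^{p}=a^{p}+\int_{a}^{\infty}s^{p-1}/(s+1)\,ds$.

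Verification of (a)--(c) for this explicit $U$ proceeds as follows. Property (b) is automatic from the construction, and property (a) I would reduce by scaling to a one-variable inequality of the form $\psi(t)\ge 0$, which is settled by examining $\psi(0)$ and $\psi'$. Property (c) is checked by It\^{o}'s change-of-variables formula applied to $U(X_{t},M^{+}_{t},M^{-}_{t})$: concavity of $U(\cdot,y,z)$ on $[z,y]$ handles the continuous martingale part; monotonicity $U_{x}\ge 0$ absorbs the compensator $-dA$ from the Doob--Meyer decomposition $X=N-A$; the boundary conditions $U_{y}(y,y,z)\le 0$ and $U_{z}(z,y,z)\le 0$ handle the infinitesimal growth of $M^{+}$ and decay of $M^{-}$; and the contribution from a jump $\Delta X_{t}>0$ is controlled by concavity of $U(\cdot,y,z)$ together with the boundary conditions, using that a positive jump can only raise $M^{+}$ and never lower $M^{-}$.

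For sharpness I would exhibit a family of stopping times $\tau_{n}$ of standard Brownian motion starting from $0$ with $\mathbb{E}(M^{+}_{\tau_{n}})^{p}/\mathbb{E}(-M^{-}_{\tau_{n}})^{p}\to c_{p}^{p}$; the extremal rule is the one identified by the free-boundary analysis (roughly, let the supremum build up to a level near $\lambda a$ before driving the process down to $-\lambda$, then let $\lambda\to\infty$), and the expectations reduce via the gambler's-ruin identity $\mathbb{P}_{x}(B\text{ hits }\beta\text{ before }\alpha)=(x-\alpha)/(\beta-\alpha)$ to integrals which, after scaling, match the defining expression for $c_{p}^{p}$. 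The main obstacle is the combination of guessing the correct $U$ (in particular pinning down the threshold $a=p^{-1}-1$) and verifying the majorization (a), which is the only genuinely nontrivial pointwise inequality in the scheme; once (a) is proved, property (c) is a standard if lengthy It\^{o} computation, and the sharpness construction is a classical gambler's-ruin calculation.
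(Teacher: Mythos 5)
Your proposal follows essentially the same route as the paper: a Burkholder-type special function $U(x,y,z)$ on the domain $z\le x\le y$, $z\le 0\le y$, explicit in closed form, with majorization, initial-value, and It\^o/supermartingale properties, together with a gambler's-ruin family of stopped Brownian motions for sharpness. The constant you identify, $c_p^p=a^p+\int_a^\infty \frac{s^{p-1}}{s+1}\,ds$ with $a=p^{-1}-1$, and the structural break at that threshold, agree exactly with the paper.

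Two small corrections to the sketch of property (c). First, the boundary condition in the $z$-variable should be $U_z(z,y,z)\ge 0$, not $\le 0$: since $M^-$ is nonincreasing, the Stieltjes measure $\mathrm{d}M^-_s$ is nonpositive, and the term $\int U_z\,\mathrm{d}M^-$ is nonpositive precisely when $U_z\ge 0$ on the set $\{x=z\}$ where that measure is supported. (Your $U_y(y,y,z)\le 0$ is right; the paper in fact has equality there.) Second, you describe the jump term as controlled by ``concavity plus the boundary conditions,'' but for a jump that carries $x$ past the current maximum $y$ the inequality
\[
U\bigl(x+d,(x+d)\vee y,z\bigr)\le U(x,y,z)+U_x(x,y,z)\,d
\]
is not a formal consequence of concavity of $U(\cdot,y,z)$ (which is in fact affine here) and the sign of $U_y$ on the diagonal; after rescaling it reduces to a genuine one-variable inequality for an auxiliary function such as $G(s)=\frac{s^p-(-y/z)^p}{s+1}-p\int_{-y/z}^s\frac{r^{p-1}}{r+1}\,dr\le 0$, verified by computing $G'$. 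This step deserves to stand alongside the majorization as one of the nontrivial pointwise inequalities, rather than being folded into ``a standard if lengthy It\^o computation.''
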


Our final result is a sharp version of the bounds \eqref{mainin0} and \eqref{mainin}, with two-sided maximal function on the left.  This gives the best constant in Burkholder's estimate \eqref{burk2}. Here is the precise statement.

\begin{theorem}\label{theorem3}
Suppose that $X$ is an adapted supermartingale with only nonnegative jumps, satisfying $X_0=0$ almost surely. Then for any $0<p<1$ we have
\begin{equation}\label{mainin2}
 ||M||_p\leq \mathfrak{c}_p||M^-||_p
\end{equation}
and the constant $\mathfrak{c}_p$ is the best possible. It is already the best possible if $X$ is assumed to be a stopped Brownian motion.
\end{theorem}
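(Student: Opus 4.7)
My plan is to follow the Burkholder-style method of special functions indicated in the abstract. I would construct, for each $p\in(0,1)$, a function $U=U_p$ on the domain $D=\{(x,y,z)\in\R\times[0,\infty)\times[0,\infty):-z\leq x\leq y\}$, interpreting $(x,y,z)$ as the instantaneous values of $(X_t,M^+_t,-M^-_t)$. The function should satisfy: (i) the majorization $U(x,y,z)\geq(y\vee z)^p-\mathfrak{c}_p^p z^p$ throughout $D$; (ii) the initial condition $U(0,0,0)\leq 0$; (iii) concavity in the $x$-variable, so that for a supermartingale $X$ with only nonnegative jumps the process $U(X_t,M^+_t,-M^-_t)$ has nonpositive expectation drift; and (iv) the one-sided boundary conditions $\partial_y U\leq 0$ on $\{x=y\}$ and $\partial_z U\leq 0$ on $\{x=-z\}$, which neutralize the local-time contributions picked up when $M^+$ increases or $M^-$ decreases. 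Given such $U$, a standard application of the generalized It\^o formula and passage to the limit yield $\E(M^+_\infty\vee(-M^-_\infty))^p\leq \mathfrak{c}_p^p\E(-M^-_\infty)^p$, which is \eqref{mainin2}.

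The nontrivial step is the construction of $U_p$, and the two-case definition of $\mathfrak{c}_p$ signals that the extremal configuration changes at $p=1/2$. For $0<p\leq 1/2$ we have $\mathfrak{c}_p=c_p$, which suggests reusing the function $V_p$ already built for Theorem \ref{theorem2}. That $V_p$ satisfies $V_p\geq y^p-c_p^p z^p$, which handles our majorant on $\{y\geq z\}$ automatically, but on $\{y<z\}$ one must additionally verify $V_p\geq(1-c_p^p)z^p$. Since $c_p\geq 1$ for $p\leq 1/2$ (a short computation from the integral formula), the right-hand side is nonpositive, and this check should reduce to an inspection of the explicit form of $V_p$. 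For $1/2<p<1$ the constant $c_p$ is no longer sufficient and a genuinely new $U_p$ is needed: I would posit harmonicity $U_{xx}=0$ in the interior so that the Brownian martingale part vanishes, impose $\partial_y U=\partial_z U=0$ on the corresponding faces, and solve the resulting linear ODE in $x$ for each fixed $(y,z)$, matching constants across the interface $\{y=z\}$. The constant $\mathfrak{c}_p$ and its integral representation then emerge as the values required by these matching conditions.

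Verification of (i)-(iv) for the resulting $U_p$ is largely mechanical: concavity in $x$ by inspection of $U_{xx}$, majorization by a one-variable inequality on each of $\{y\geq z\}$ and $\{y<z\}$, and boundary smoothness by direct differentiation. The passage from Brownian paths to general nonnegative-jump supermartingales proceeds as in the previous theorems: the It\^o formula is applied in its jump form, the concavity of $U$ in $x$ together with $\Delta X_t\geq 0$ makes the jump terms contribute nonpositively, and the finite-variation part contributes nonpositively by the supermartingale property.

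For sharpness I would use stopped one-dimensional Brownian motion, following Burkholder's approach in Example 6.3 of \cite{Bur1}. Concretely, with $B$ a standard Brownian motion started at $0$, I would take $X^{(n)}=B_{\tau_n}$ for $\tau_n$ the first exit time from a region whose boundary is tailored to the level sets of $U_p$ (equivalently, a composition of hitting times of prescribed levels). Explicit computation of $\E(M^+\vee(-M^-))^p$ and $\E(-M^-)^p$ via gambler's ruin produces a ratio tending to $\mathfrak{c}_p^p$, with the $p\leq 1/2$ and $p>1/2$ cases distinguished by whether the extremal path's maximum is dominated by $M^+$ or $|M^-|$. The principal obstacle throughout is the construction of $U_p$ for $p>1/2$: finding a three-variable function that is concave in $x$, meets the prescribed majorant with equality in the right regime, respects both boundary conditions, and whose optimal constant matches precisely the stated integral formula is a delicate free-boundary calculation.
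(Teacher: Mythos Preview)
Your plan matches the paper's almost exactly: a three-variable special function linear in $x$, with the boundary conditions you list, reused from Theorem~\ref{theorem2} when $p\le 1/2$ (the paper's reason is that the first inequality in \eqref{majj} already gives $U\ge(\max\{y,-(p^{-1}-1)z\})^p-c_p^p(-z)^p$, and $p^{-1}-1\ge 1$ precisely when $p\le 1/2$), and rebuilt piecewise across $\{y=-z\}$ for $p>1/2$ via the free-boundary computation you sketch. Sharpness is likewise obtained by a multi-stage stopping of Brownian motion with geometrically growing thresholds and explicit gambler's-ruin calculations, then letting the parameters tend to their extremal values.

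One point to correct: concavity of $U$ in $x$ together with $\Delta X_t\geq 0$ does \emph{not} by itself control the jump terms, because a positive jump of $X$ can force a simultaneous jump of $M^+$. The relevant quantity is $U\big(x+d,(x+d)\vee y,z\big)-U(x,y,z)-U_x(x,y,z)\,d$, and the change in the second argument is not governed by concavity in the first. The paper therefore verifies this jump inequality directly (Lemma~\ref{lemma1}(iii) and its analogue in Lemma~\ref{lemma2}); for the specific $U$ you will construct it does hold, but the verification is a separate one-variable computation, not a consequence of concavity in $x$. Similarly, the nonpositivity of the finite-variation contribution uses $U_x\ge 0$ rather than concavity.
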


A few words about the approach and the organization of the paper are in order. Our proofs of \eqref{mainin0}, \eqref{mainin} and \eqref{mainin2} rest on the existence of a certain special functions, and have their roots in the theory of optimal stopping. We present them in the next section. In Section \ref{sharpness} we address the optimality of the constants $C_p$, $c_p$ and $\mathfrak{c}_p$. The final section is devoted to the discussion on related results arising in harmonic analysis on Euclidean domains.


\section{Proofs of \eqref{mainin0}, \eqref{mainin} and \eqref{mainin2}}

Throughout this section $p$ is a fixed number belonging to $(0,1)$. The contents of this section is split naturally into three parts.

\subsection{Proof of \eqref{mainin0}} As announced above, the argument depends heavily on the existence of an appropriate special function. Consider $U:\R\times (-\infty,0]\to \R$ defined by the formula
$$ U(x,z)=\alpha_p^{-1}(-z)^{p-1}\big[px-(p-1)z\big],$$
where $\alpha_p$ is given by \eqref{defalfa0} or \eqref{defalfa}, depending upon the range of  $p$.

\begin{lemma}\label{lemma0}
The function $U$ has the following properties.

(i) It is of class $C^\infty$ on $\R\times (-\infty,0)$.

(ii) For any $z<0$ we have
\begin{equation}\label{uz0}
U_x(x,z)\geq 0\quad \mbox{and}\quad U_z(z,z)= 0.
\end{equation}

(iii) If $x\geq z$, then for any $d\geq 0$ we have
\begin{equation}\label{jump0}
U(x+d,z)= U(x,z)+U_x(x,z).
\end{equation}

(iv) If $x\geq z$, then
\begin{equation}\label{maj0}
U(x,z)\geq |x|^p-C_p^p(-z)^p.
\end{equation}
\end{lemma}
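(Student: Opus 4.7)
The plan is to handle the four properties in order, with (iv) as the only substantive step.

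For (i), smoothness is immediate: $(-z)^{p-1}$ is $C^\infty$ on $(-\infty,0)$ and $U$ is its product with a polynomial in $x,z$. For (ii), I would differentiate directly to obtain $U_x(x,z)=p\alpha_p^{-1}(-z)^{p-1}\geq 0$, and (cleanest in the variable $u=-z$) a short computation gives
$$U_z(x,z)=p(1-p)\alpha_p^{-1}(-z)^{p-2}(x-z),$$
vanishing at $x=z$. Property (iii) is the observation that $U$ is \emph{affine} in $x$: since $U_x$ depends only on $z$, we have $U(x+d,z)-U(x,z)=U_x(x,z)\,d$ for every $d\geq 0$, which (reading the displayed identity with the implicit factor $d$) is the claim; note that $x\geq z$ is preserved by adding a nonnegative $d$, so the formula stays in the relevant domain.

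The work is concentrated in (iv). Since $U(\lambda x,\lambda z)=\lambda^p U(x,z)$ for $\lambda>0$ and the right-hand side $|x|^p-C_p^p(-z)^p$ shares the same scaling, the inequality on $\{x\geq z,\,z<0\}$ reduces, via $t=x/(-z)$ and division by $(-z)^p$, to the one-variable claim
$$\phi(t):=\alpha_p^{-1}\bigl[pt+(p-1)\bigr]+C_p^p-|t|^p\;\geq\;0,\qquad t\geq -1.$$
Because of the corner of $|t|$ at the origin, I would study $\phi$ on $[-1,0]$ and $[0,\infty)$ separately. On $[-1,0]$, $\phi'(t)=p\alpha_p^{-1}+p(-t)^{p-1}>0$, so $\phi$ is strictly increasing and its infimum is $\phi(-1)=C_p^p-\alpha_p^{-1}-1$. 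On $[0,\infty)$, $\phi''>0$ and the unique critical point $t^{\ast}=\alpha_p^{1/(1-p)}$ gives the minimum
$$\phi(t^{\ast})=C_p^p-(1-p)\alpha_p^{-1}\bigl(\alpha_p^{1/(1-p)}+1\bigr),$$
using $(t^{\ast})^{p-1}=\alpha_p^{-1}$.

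It then remains to match these two expressions against the two definitions of $(\alpha_p,C_p)$. For $p\in(p_0,1)$, the defining equation \eqref{defalfa} together with $C_p^p=1+\alpha_p^{-1}$ makes both $\phi(-1)$ and $\phi(t^{\ast})$ equal to zero, so $\phi\geq 0$ with equality at both points. For $p\in(0,p_0]$, substituting $\alpha_p=((1-p)/p)^{1-p}$ and $C_p=(1-p)/p$ gives $t^{\ast}=(1-p)/p$ and $\phi(t^{\ast})=0$ after using $pq=1-p$ with $q=(1-p)/p$; the remaining requirement $\phi(-1)\geq 0$ becomes $q^p\geq 1+q^{p-1}$, and \eqref{defp0} is precisely the equality case at $p=p_0$, so the inequality is valid throughout $(0,p_0]$ by the monotonicity in $p$ of the relevant expression. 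The boundary case $z=0$ of (iv) follows by taking a limit $z\uparrow 0$: for $x=0$ both sides vanish, and for $x>0$ one has $U(x,0)=+\infty$, so the estimate is trivial.

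The main obstacle, in my view, is the case analysis in (iv) — verifying that the choices of $C_p$ and $\alpha_p$ given in the statement are exactly the ones producing $\min\phi=0$, and that the transition value $p_0$ in \eqref{defp0} is precisely where the two regimes agree. Everything else is linearity, smoothness, or homogeneity.
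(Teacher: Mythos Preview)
Your proposal is correct and follows essentially the same approach as the paper: reduce (iv) by homogeneity to $z=-1$, identify the tangency point $t^\ast=\alpha_p^{1/(1-p)}$ between the linear function $U(\cdot,-1)$ and $|\cdot|^p-C_p^p$, and then check the endpoint condition at $t=-1$, which splits into the two regimes for $p$ via the definition of $p_0$. The paper phrases the comparison as ``$\varphi$ linear increasing, $\psi$ concave on each side of $0$, tangent at $t^\ast$, and $\varphi(-1)\geq\psi(-1)$,'' while you study the difference $\phi=\varphi-\psi$ directly and locate its minima; these are two presentations of the same argument.
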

\begin{proof}
The first three parts are evident.  The only nontrivial statement is the majorization \eqref{maj0}. By homogeneity, it is enough to show it for $z=-1$. Let $\varphi(x)=U(x,-1)$ and  $\psi(x)=|x|^p-C_p^p$ for $x\geq -1$. The desired bound follows at once from the following four observations:
\begin{align}
&\varphi\mbox{ is increasing},\\
&\psi\mbox{ is concave, decreasing on }(z,0), \mbox{ and concave, increasing on }(0,\infty),\\
&\varphi(\alpha_p^{1/(1-p)})=\psi(\alpha_p^{1/(1-p)}) \mbox{ and }\varphi'(\alpha_p^{1/(1-p)})=\psi'(\alpha_p^{1/(1-p)}),\\
&\varphi(-1)\geq \psi(-1).\label{init}
\end{align}
The first three conditions are clear and  follow from straightforward computations. The latter observation employs  the definition of $p_0$. Indeed, if $p\leq p_0$, then $p^{-1}-1\geq \left(p^{-1}-1\right)^{1-p}+1$, which is equivalent \eqref{init}. On the other hand, if $p>p_0$, then the definitions of $\alpha_p$ and $C_p$ guarantee that we actually have equality in \eqref{init}. 
This finishes the proof.
\end{proof}

\begin{proof}[Proof of \eqref{mainin0}]
Let $X=(X_t)_{t\geq 0}$ be an adapted supermartingale starting from $0$, which admits only nonnegative jumps, and let $\e>0$ be a fixed parameter. In view of Lemma \ref{lemma0} (i), we may apply It\^o's formula to $U$ and the process $Z^\e=((X_t,M^-_t\wedge (-\e)))_{t\geq 0}$. (We refer the reader to Protter \cite{Pro} for the general It\^o formula used here.) As the result of this application, we get that for each $t\geq 0$,
\begin{equation}\label{ito6}
U(Z_t^\e)=I_0+I_1+I_2+\frac{I_3}{2}+I_4,
\end{equation}
where
\begin{align*}
I_0&=U(Z_0^\e)=U(0,-\e),\\
I_1&=\int_{0+}^t U_x(Z_{s-}^\e)\mbox{d}X_s,\\
I_2&=\int_{0+}^tU_z(Z_{s-}^\e)\mbox{d}\big(M_{s-}^{-}\wedge(-\e)\big),\\
I_3&=\int_{0+}^t U_{xx}(Z_{s-}^\e)\mbox{d}[X,X]_s^c,\\
I_4&=\sum_{0<s\leq t} \Big[U(Z_s^\e)-U(Z^\e_{s-})-U_x(Z^\e_{s-})\Delta X_s\Big].
\end{align*}
Note that due to the assumption on the jumps of $X$, the process $M^-$ is continuous; in particular, this explains why there is no summand  $U_z(Z^\e_{s-})\Delta \big(M_{s}^-\wedge (-\e)\big)$ in $I_4$.

Now, let us analyze the behavior of the terms $I_1$ through $I_4$ separately. The first of them has nonpositive expectation, by the properties of stochastic integrals. Indeed, if $X=N+A$ is  the Doob-Meyer decomposition for $X$ (see e.g. Protter \cite{Pro}), then we have
$$ I_1=\int_{0+}^t U_x(Z_{s-}^\e)\mbox{d}N_s+\int_{0+}^t U_x(Z_{s-}^\e)\mbox{d}A_s.$$
Now the first term has mean zero, while the second integral is nonpositive, because of the first inequality in \eqref{uz} and the fact that $A$ is a nonincreasing process. 
 To deal with $I_2$, we make use of the second condition in \eqref{uz0}. By the aforementioned continuity of $M^-$, we see that the process $M_{s-}^{-}\wedge (-\e)$ decreases only when $X_{s}=M_s^-$, i.e., when the coordinates of the variable $Z_{s-}^\e$ are equal. Then, as we have proved in \eqref{uz0}, we have $U_z(Z_{s-}^\e)=0$ and hence the integral  $I_2$ is zero. The term $I_3$ also vanishes, since for a fixed $z$, the function $x\mapsto U(x,z)$ is linear. Finally, each summand in $I_4$ is  zero: this is guaranteed by \eqref{jump0} and the assumption that $X$ has only nonnegative jumps. Thus, putting all the above facts together and plugging them into \eqref{ito6}, we obtain
$$ \E U(Z_t^\e)\leq U(0,-\e),$$
or, in view of \eqref{maj0},
$$ \E |X_t|^p\leq C_p^p\E \left(-\big(M^-_{t}\wedge (-\e)\big)\right)^p+\alpha_p^{-1}(p-1)\e^p.$$
Letting $\e\to 0$ gives $\E |X_t|^p\leq C_p^p\E (-M^-_{t})^p,$  
and it remains to take the supremum over $t$ to obtain \eqref{mainin0}.
\end{proof}

\subsection{Proof of \eqref{mainin}} Here the reasoning will be more involved. In particular, due to the appearance of the supremum process in \eqref{mainin}, we are forced to consider special functions of \emph{three} variables (corresponding to $X$, $M^+$ and $M^-$). Introduce $U:\R \times [0,\infty)\times (-\infty,0]\to \R$, given by
$$ U(x,y,z)=y^p-c_p^p(-z)^p+p(x-z)(-z)^{p-1}\int_{-y/z}^\infty \frac{r^{p-1}}{r+1}\mbox{d}r$$
if $y>-\left(\frac{1}{p}-1\right)z$, and
$$ U(x,y,z)=\left(\left(\frac{1}{p}-1\right)^p-c_p^p\right)(-z)^p+p(x-z)(-z)^{p-1}\int_{p^{-1}-1}^\infty \frac{r^{p-1}}{r+1}\mbox{d}r$$
if $y\leq -\left(\frac{1}{p}-1\right)z$. Let us prove some important facts concerning this object; they are gathered in the following statement, which can be regarded as the analogue of Lemma \ref{lemma0}.

\begin{lemma}\label{lemma1}
The function $U$ enjoys the following properties.

(i) For all $z<0<y$, the function $U(\cdot,y,z): x\mapsto U(x,y,z)$ is of class $C^2$ and the partial derivatives $U_y(y,y,z)$, $U_z(z,y,z)$ exist.

(ii) For all $z<x<y$ we have
\begin{equation}\label{uz}
 U_x(x,y,z)\geq 0,\qquad U_y(y,y,z)= 0\qquad \mbox{and}\qquad U_z(z,y,z)\geq 0.
\end{equation}

(iii) If $z\leq x\leq y$, then for any $d\geq 0$ we have the bound
\begin{equation}\label{jump}
U(x+d,(x+d)\vee y,z)\leq U(x,y,z)+U_x(x,y,z)d.
\end{equation}

(iv) If $x\geq z$ and $y\geq 0$, then
\begin{equation}\label{maj}
U(x,y,z)\geq y^p-c_p^p(-z)^p.
\end{equation}
\end{lemma}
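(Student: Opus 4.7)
The approach is to verify each of the four properties by direct calculation, exploiting the fact that $U$ is affine in the first variable on each of its two pieces. Throughout I write $G(u)=\int_u^\infty r^{p-1}/(r+1)\,\mathrm{d}r$, so that $G'(u)=-u^{p-1}/(u+1)$, and I use the defining identity $c_p^p=(1/p-1)^p+G(1/p-1)$. The interface between the two pieces is the surface $y=-(1/p-1)z$, and on this surface the two formulas agree (because $y^p=((1/p-1)(-z))^p$ there); so each piece can be treated separately, with continuity at the interface handled at the end.

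Properties (i), (ii) and (iv) should be routine. Linearity in $x$ on each piece gives $C^\infty$ smoothness in $x$, and since the interface condition $y=-(1/p-1)z$ does not involve $x$, no pathology arises from crossing pieces in the $x$-variable. For $U_x\geq 0$, both branches give $U_x=p(-z)^{p-1}G(\cdot)\geq 0$. The identity $U_y(y,y,z)=0$ is immediate on piece $2$ (where $U$ is $y$-independent) and on piece $1$ follows from the chain rule together with the identity $(-y/z)+1=(y-z)/(-z)$. For $U_z(z,y,z)$, substituting $x=z$ kills all terms carrying a factor of $(x-z)$, leaving $p(-z)^{p-1}[c_p^p-G(-y/z)]\geq 0$ on piece $1$ and exactly $0$ on piece $2$ by the definition of $c_p$. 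The majorization (iv) splits cleanly too: on piece $1$ the difference $U(x,y,z)-[y^p-c_p^p(-z)^p]$ equals $p(x-z)(-z)^{p-1}G(-y/z)\geq 0$; on piece $2$ it equals $[((1/p-1)(-z))^p-y^p]+p(x-z)(-z)^{p-1}G(1/p-1)$, with both brackets $\geq 0$ (the first because $0\leq y\leq (1/p-1)(-z)$ and $t\mapsto t^p$ is increasing on $[0,\infty)$, the second because $x\geq z$).

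The main obstacle is (iii). The plan is to reduce it to a concavity statement. Fix $y,z$ with $z\leq x\leq y$ and introduce $\phi(w)=U(w,w\vee y,z)$ for $w\geq x$; the desired inequality reads $\phi(x+d)\leq \phi(x)+U_x(x,y,z)\,d$, and it is enough to show that $\phi$ is concave on $[x,\infty)$ with right-derivative $U_x(x,y,z)$ at $x$. On $[x,y]$ linearity of $U$ in $x$ makes $\phi$ affine with slope $U_x(x,y,z)$. On $(y,\infty)$ the diagonal vanishing $U_y(y,y,z)=0$ from (ii) gives $\phi'(w)=U_x(w,w,z)$; this function of $w$ is decreasing on piece $1$ (where it equals $p(-z)^{p-1}G(-w/z)$ and $G$ is decreasing with $z<0$) and constant on piece $2$, and the two formulas glue continuously at the interface $w=-(1/p-1)z$. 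Since $U_x$ does not depend on the first variable, the slope on $[x,y]$ coincides with $U_x(y,y,z)$, matching the right limit of $\phi'$ at $w=y$; hence $\phi'$ is nonincreasing on $[x,\infty)$, $\phi$ is concave, and the tangent-line bound at $w=x$ yields (iii). The delicate point is the possible crossing of the piecewise boundary during the jump, and what saves the argument is precisely the continuity of $w\mapsto U_x(w,w,z)$ at $w=-(1/p-1)z$.
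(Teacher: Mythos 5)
Your proof is correct. For parts (i), (ii), and (iv) you follow essentially the same routine computations as the paper: the verification of $U_y(y,y,z)=0$ via the chain rule and the identity $(-y/z)+1=(y-z)/(-z)$, the computation of $U_z(z,y,z)$ using the definition of $c_p$ (zero on the $y$-independent piece, strictly positive on the other), and the majorization via nonnegativity of the integral terms plus the comparison $y^p\leq ((1/p-1)(-z))^p$ on the second piece --- all of this matches the paper's treatment.

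For part (iii), however, your argument is genuinely different. The paper handles \eqref{jump} by a three-case analysis according to the position of $x+d$ and $y$ relative to the interface $-(1/p-1)z$: the first two cases are equalities, and the remaining two are reduced to showing that explicit auxiliary functions $F(s)$ and $G(s)$ are nonpositive by checking that they vanish at the left endpoint and have nonpositive derivative. Your approach instead introduces $\phi(w)=U(w,w\vee y,z)$, observes via the diagonal identity $U_y(y,y,z)=0$ that $\phi'(w)=U_x(w,w,z)$ for $w>y$, and proves concavity of $\phi$ on $[x,\infty)$ from the monotonicity of $w\mapsto U_x(w,w,z)$ together with the continuity of this function across the piecewise interface. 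The two arguments encode the same underlying calculation (the paper's condition $F'(s)=-\frac{s^p-(p^{-1}-1)^p}{(s+1)^2}\leq 0$ is exactly the monotonicity of your $\phi'$), but yours packages it as a single concavity statement with a tangent-line bound, which unifies the cases and makes transparent \emph{why} the inequality is insensitive to whether the jump $x\to x+d$ crosses the interface. The paper's casework is more explicit and perhaps easier to audit term by term, while your formulation makes the geometric content clearer and is the more natural vehicle if one later wants to extend the argument.
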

\begin{proof}
(i) This is straightforward; we leave the verification to the reader.

(ii) The estimate for $U_x$ is evident. The identity $U_y(y,y,z)=0$ is also immediate, both for $y>-\left(\frac{1}{p}-1\right)z$ and $y\leq -\left(\frac{1}{p}-1\right)z$. To show the estimate for $U_z$, note that if $y\leq -\left(\frac{1}{p}-1\right)z$, then
\begin{align*}
 U_z(z,y,z)=-p\left(\left(\frac{1}{p}-1\right)^p-c_p^p\right)(-z)^{p-1}-p(-z)^{p-1}\int_{p^{-1}-1}^\infty \frac{r^{p-1}}{r+1}\mbox{d}r=0,
\end{align*}
by the formula for $c_p$. On the other hand, if $y> -\left(\frac{1}{p}-1\right)z$, we easily derive that
\begin{align*}
 U_z(z,y,z)&=pc_p^p(-z)^{p-1}-p(-z)^{p-1}\int_{-y/z}^\infty \frac{r^{p-1}}{r+1}\mbox{d}r\\
 &> pc_p^p(-z)^{p-1}-p(-z)^{p-1}\int_{p^{-1}-1}^\infty \frac{r^{p-1}}{r+1}\mbox{d}r=0,
\end{align*}
where the latter equality is again due to the definition of $c_p$.

(iii) If $x+d\leq y$, then both sides are equal, because of the linearity of $x\mapsto U(x,y,z)$. Therefore, suppose that $x+d>y$ and consider the following cases. If $x+d\leq -\left(\frac{1}{p}-1\right)z$, then also $y\leq -\left(\frac{1}{p}-1\right)z$ and again \eqref{jump} becomes an equality. If $y< -\left(\frac{1}{p}-1\right)z<x+d$, then \eqref{jump} reads
\begin{align*}
&(x+d)^p-c_p^p(-z)^p+p(x+d-z)(-z)^{p-1}\int_{-(x+d)/z}^\infty \frac{r^{p-1}}{r+1}\mbox{d}r\\
&\qquad \leq \left(\left(\frac{1}{p}-1\right)^p-c_p^p\right)(-z)^p+p(x+d-z)(-z)^{p-1}\int_{p^{-1}-1}^\infty \frac{r^{p-1}}{r+1}\mbox{d}r,
\end{align*}
or
$$ F(s):=\frac{s^p-\left(\frac{1}{p}-1\right)^p}{s+1}-p\int_{p^{-1}-1}^s \frac{r^{p-1}}{r+1}\mbox{d}r\leq 0,$$
with $s=-(x+d)/z\geq p^{-1}-1$. However, the function $F$ vanishes for $s=p^{-1}-1$, and its derivative for $s>p^{-1}-1$ is
$$ F'(s)=-\frac{s^p-(p^{-1}-1)^p}{(s+1)^2}\leq 0,$$
so $F$ is indeed nonpositive and \eqref{jump} holds true. The final case we need to consider is when $-\left(\frac{1}{p}-1\right)z<y<x+d.$ Then \eqref{jump} takes the form
\begin{align*}
&(x+d)^p-c_p^p(-z)^p+p(x+d-z)(-z)^{p-1}\int_{-(x+d)/z}^\infty \frac{r^{p-1}}{r+1}\mbox{d}r\\
&\qquad \leq y^p-c_p^p(-z)^p+p(x+d-z)(-z)^{p-1}\int_{-y/z}^\infty \frac{r^{p-1}}{r+1}\mbox{d}r,
\end{align*}
which can be rewritten as
$$ G(s):=\frac{s^p-\left(-y/z\right)^p}{s+1}-p\int_{-y/z}^s \frac{r^{p-1}}{r+1}\mbox{d}r\leq 0,$$
with $s=-(x+d)/z$. To see that the latter estimate is valid, we observe that $G(-y/z)=0$ and
$$ G'(s)=-\frac{s^p-(-y/z)^p}{(s+1)^2}\leq 0$$
provided $s>-y/z$. This proves the desired bound. 

(iv) If $x\geq z$, then the terms in $U$ involving the appropriate integrals are nonnegative. Therefore, we see that
\begin{equation}\label{majj}
 U(x,y,z)\geq \left(\max\left\{y,-\left(\frac{1}{p}-1\right)z\right\}\right)^p-c_p^p(-z)^p\geq y^p-c_p^p(-z)^p.
\end{equation}
This yields the claim and completes the proof of the lemma.
\end{proof}

\begin{proof}[Proof of \eqref{mainin}] Here the reasoning is similar to that appearing in the proof of \eqref{mainin0}, so we will be brief. 
Pick an arbitrary adapted supermartingale $X=(X_t)_{t\geq 0}$ starting from $0$, which has only nonnegative jumps, and let $\e>0$. Consider the process $Z^\e=((X_t,M_t^+\vee \e, M_t^-\wedge (-\e)))_{t\geq 0}$. By Lemma \ref{lemma1} (i), we are allowed to apply It\^o's formula to $U$ and this process. As the result,  we obtain that for  $t\geq 0$,
\begin{equation}\label{ito5}
U(Z_t^\e)=I_0+I_1+I_2+\frac{I_3}{2}+I_4,
\end{equation}
where
\begin{align*}
I_0&=U(Z_0^\e)=U(0,\e,-\e),\\
I_1&=\int_{0+}^t U_x(Z_{s-}^\e)\mbox{d}X_s,\\
I_2&=\int_{0+}^tU_y(Z_{s-}^\e)\mbox{d}(M^{+c}_{s-}\vee \e)+\int_{0+}^tU_z(Z_{s-}^\e)\mbox{d}(M_{s}^{-}\wedge (-\e)),\\
I_3&=\int_{0+}^t U_{xx}(Z_{s-}^\e)\mbox{d}[X,X]_s^c,\\
I_4&=\sum_{0<s\leq t} \Big[U(Z_s^\e)-U(Z^\e_{s-})-U_x(Z^\e_{s-})\Delta X_s\Big].
\end{align*}
Here $(M^{+c}_{s}\vee \e)_{s\geq 0}$ denotes the continuous part of the process $M^+\vee \e$. Note that because of the appearance of this process in $I_2$, there is no corresponding term $U(Z^\e_{s-})\Delta(M^+_s\vee \e)$ in $I_4$. On the other hand, as in the proof of \eqref{mainin0}, the process $M^-\wedge (-\e)$ is continuous due to the assumption on the sign of the jumps of $X$.

Now, we see that $\E I_1\leq 0$, by the properties of stochastic integrals (see the proof of \eqref{mainin0} for a similar argument). Next, an application of \eqref{uz} gives that the first integral in $I_2$ is zero and the second is nonpositive; again, see the analogous reasoning in the proof of \eqref{mainin0}. The term $I_3$ vanishes, since $U_{xx}$ is zero. Finally, each summand in $I_4$ is nonpositive: this has been just proved in \eqref{jump} above. Therefore, combining all the above facts, we see that
$$ \E U(Z_t^\e)\leq U(0,\e,-\e),$$
or, by virtue of \eqref{maj},
$$ \E \left(M_t^+\vee \e\right)^p\leq c_p^p\E \left((-M_t^-)\vee (-\e)\right)^p+U(0,\e,-\e).$$
It remains to let $\e\to 0$ and  then let $t$ go to infinity. The proof is complete.
\end{proof}

\subsection{Proof of \eqref{mainin2}} Finally, we turn our attention to the bound for the two-sided maximal function. The idea is to proceed exactly in the same manner as in the preceding subsection. What properties should the appropriate special function have? A careful inspection of the above proof shows that it is enough to find $U$ enjoying the conditions of Lemma \ref{lemma1}, with \eqref{maj} replaced by
$$ U(x,y,z)\geq (\max\{y,-z\})^p-\mathfrak{c}_p^p(-z)^p.$$
However, the function $U$ introduced in \S2.2 does have this property when $p\in (0,1/2]$; see the first estimate in \eqref{majj}. Consequently, for these values of $p$, the inequality \eqref{mainin2} follows at once from the reasoning presented previously (note that $c_p=\mathfrak{c}_p$ provided $0<p\leq 1/2$). Thus, it remains to establish the desired bound in the range $(1/2,1)$ only. Suppose that $p$ lies in this interval and consider a function $U:\R \times [0,\infty)\times (-\infty,0]\to \R$ defined by 
$$ U(x,y,z)=(1-\mathfrak{c}_p^p)(-z)^p+p(-z)^{p-1}(x-z)\int_1^\infty \frac{r^{p-1}}{r+1}\mbox{d}r$$
if $y<-z$, and
$$ U(x,y,z)=y^p-\mathfrak{c}_p^p(-z)^p+p(-z)^{p-1}(x-z)\int_{-y/z}^\infty \frac{r^{p-1}}{r+1}\mbox{d}r$$
for remaining $(x,y,z)$. For the sake of completeness, let us list the key properties of this function in a lemma below. We omit the straightforward proof: analogous argumentation has been already presented in the proof of Lemma \ref{lemma1}.

\begin{lemma}\label{lemma2}
The function $U$ enjoys the following properties.

(i) For all $z<0<y$, the function $U(\cdot,y,z)$ is of class $C^2$ and the partial derivatives $U_y(y,y,z)$, $U_z(z,y,z)$ exist.

(ii) For all $z<x<y$ we have
$$ U_x(x,y,z)\geq 0,\quad  U_y(y,y,z)= 0\quad \mbox{and}\quad U_z(z,y,z)\geq 0.$$

(iii) If $z<x<y$, then for any $d\geq 0$ we have
$$ U(x+d,(x+d)\vee y,z)\leq U(x,y,z)+U_x(x,y,z)d.$$

(iv) If $x\geq z$, then we have
$$
U(x,y,z)\geq (y\vee (-z))^p-\mathfrak{c}_p^p(-z)^p.
$$
\end{lemma}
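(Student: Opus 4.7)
The plan is to mirror the proof of Lemma \ref{lemma1} almost verbatim, since the new function $U$ has the same piecewise-linear-in-$x$ structure, with the threshold $y=-z$ now playing the role previously played by $y=-(1/p-1)z$. First I would check that the two pieces agree continuously across $y=-z$: when $-y/z=1$ the lower formula reduces to $y^p-\mathfrak{c}_p^p(-z)^p=(1-\mathfrak{c}_p^p)(-z)^p$ and the integral becomes $\int_1^\infty$, matching the upper formula. Each piece is linear in $x$, hence $C^2$ there, which handles part (i); the existence of the one-sided partial derivatives $U_y(y,y,z)$ and $U_z(z,y,z)$ is then immediate within each piece.

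For part (ii), the inequality $U_x\geq 0$ is immediate, because both integrals appearing in $U$ are over a half-line with a positive integrand. The identity $U_y(y,y,z)=0$ is trivial in the region $y<-z$ (where $U$ does not depend on $y$), and in the region $y\geq -z$ it follows from the same cancellation used in Lemma \ref{lemma1}(ii): one computes $\frac{d}{dy}\int_{-y/z}^\infty \frac{r^{p-1}}{r+1}\,dr = \frac{(-y/z)^{p-1}}{((-y/z)+1)z}$, and this exactly kills $py^{p-1}$ after setting $x=y$. For $U_z(z,y,z)\geq 0$ I would split into the two regions and reduce everything to the identity
\[
1-\mathfrak{c}_p^p+\int_1^\infty \frac{r^{p-1}}{r+1}\,dr = 0,
\]
which is the definition of $\mathfrak{c}_p$ for $1/2<p<1$. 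In the region $y<-z$ the computation yields exactly $0$; in the region $y\geq -z$ one gets $p(-z)^{p-1}\bigl[1+\int_1^{-y/z}\frac{r^{p-1}}{r+1}\,dr\bigr]\geq 0$.

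The real technical content lies in (iii), and I would follow the three-case structure used in Lemma \ref{lemma1}(iii). When $x+d\leq y$, both sides agree by linearity of $U$ in $x$. Assume $x+d>y$. If both $y$ and $x+d$ lie in $\{w<-z\}$, the bound is again an equality. In the remaining two cases (either the threshold $-z$ is crossed between $y$ and $x+d$, or both $y$ and $x+d$ lie in $\{w\geq -z\}$), after dividing by $(-z)^{p-1}$ and writing $s=-(x+d)/z$, the claim reduces to verifying
\[
H(s):=\frac{s^p-s_0^p}{s+1}-p\int_{s_0}^s \frac{r^{p-1}}{r+1}\,dr\leq 0\quad\text{for } s\geq s_0,
\]
with $s_0=1$ in the crossing case and $s_0=-y/z$ otherwise. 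Since $H(s_0)=0$ and $H'(s)=-(s^p-s_0^p)/(s+1)^2\leq 0$ for $s\geq s_0$, the bound follows. This case split is the step I expect to require the most bookkeeping, but it is a direct transcription of the analogous calculation in Lemma \ref{lemma1}.

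For part (iv), the integral term in $U$ is nonnegative (since $x\geq z$ and the integrand is positive), so in the region $y<-z$ one gets $U(x,y,z)\geq (1-\mathfrak{c}_p^p)(-z)^p=(-z)^p-\mathfrak{c}_p^p(-z)^p$, and in the region $y\geq -z$ one gets $U(x,y,z)\geq y^p-\mathfrak{c}_p^p(-z)^p$. Since $y\vee(-z)=-z$ in the first region and $y\vee(-z)=y$ in the second, both bounds combine to $U(x,y,z)\geq (y\vee(-z))^p-\mathfrak{c}_p^p(-z)^p$, as required.
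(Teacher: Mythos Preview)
Your proposal is correct and follows precisely the approach the paper intends: the paper omits the proof of Lemma~\ref{lemma2} entirely, stating only that ``analogous argumentation has been already presented in the proof of Lemma~\ref{lemma1},'' and you have carried out exactly that transcription, with the threshold $y=-z$ replacing $y=-(1/p-1)z$ and the defining identity $\mathfrak{c}_p^p=1+\int_1^\infty r^{p-1}/(r+1)\,dr$ playing the role that the formula for $c_p$ played before. Your case analysis in (iii) and the reduction to the auxiliary function $H(s)$ match the structure of Lemma~\ref{lemma1}(iii) line by line.
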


Equipped with this statement, we obtain the proof of \eqref{mainin2} by a word-by-word repetition of the reasoning from \S2.2. Since no additional arguments are needed, we omit the details, leaving them to the reader.

\section{Sharpness}\label{sharpness} 
For the sake of clarity, we have decided to split this section into four parts. Throughout, $B=(B_t)_{t\geq 0}$ denotes a standard, one-dimensional Brownian motion starting from $0$.

\subsection{Optimality of $C_p$ in \eqref{mainin0}, the case $p\in (0, p_0]$} Let $\beta$ be an arbitrary positive number smaller than $p^{-1}-1$ and let $\delta>0$. Let $\tau_0$ be defined by
$$ \tau_0=\inf\{t:B_t\in \{-1,\beta\}\}.$$
Now, define the variable $\sigma$ and the stopping times $\tau_1$, $\tau_2$, $\ldots$ by the following inductive procedure. Suppose that $n$ is a given nonnegative integer. If $B_{\tau_n}=\beta(1+\delta)^n$, then put $\sigma=n$ and $\tau_{n+1}=\tau_{n+2}=\ldots=\tau_n$; on the other hand, if $B_{\tau_n}=-(1+\delta)^n$, then let
$$ \tau_{n+1}=\inf\{t>\tau_n:B_t\in \{-(1+\delta)^{n+1},\beta(1+\delta)^{n+1}\}.$$
To gain some intuition about these random variables, let us look at the behavior of the sequence $(B_{\tau_n})_{n\geq 0}$. The first step is to wait until the Brownian motion reaches $-1$ or $\beta$. If $B_{\tau_0}=\beta$, we stop the evolution.  If $B_{\tau_0}=-1$, then we start the second stage and continue until $B_t$ reaches $-(1+\delta)$ or $\beta(1+\delta)$.  If the second case occurs we stop but if $B_{\tau_1}=-(1+\delta)$, then we start the third stage and wait until $B$ reaches $-(1+\delta)^2$ or $\beta(1+\delta)^2$.  This  pattern is then repeated. We define the random variable  $\sigma$ to be the number of nontrivial stages which occur \emph{before} the Brownian motion stops. Using  elementary properties of  Brownian motion, we see that
\begin{equation}\label{s00}
 \mathbb{P}(\sigma=0)=\frac{1}{\beta+1}
\end{equation}
and, for any nonnegative integer $n$,
\begin{equation}\label{s0m}
 \mathbb{P}(\sigma>n)=\frac{\beta}{\beta+1}\left(\frac{\beta(1+\delta)+1}{(\beta+1)(1+\delta)}\right)^n.
\end{equation}
Hence, in particular, for any $n=1,\,2,\,\ldots$ we have
\begin{equation}\label{s0n}
\mathbb{P}(\sigma=n)=\frac{\beta}{\beta+1}\left(\frac{\beta(1+\delta)+1}{(\beta+1)(1+\delta)}\right)^{n-1}\frac{\delta}{(\beta+1)(1+\delta)}.
\end{equation}
Consequently, we see that $\eta$, the pointwise limit of the sequence $(\tau_n)_{n\geq 0}$, is finite almost surely. Put $X=(B_{\eta\wedge t})_{t\geq 0}$ and let us derive the $p$-th norms of $X_\eta$ and $M^-$. By the very construction, $X_\eta = \beta(1+\delta)^n$ on the set $\{\sigma=n\}$, so by \eqref{s00} and \eqref{s0n},
\begin{align*} 
||X_\eta||_p^p&=\E |X_\eta|^p\\
&=\frac{\beta^p}{\beta+1}+\sum_{n=1}^\infty (1+\delta)^{np}\cdot \frac{\beta}{\beta+1}\left(\frac{\beta(1+\delta)+1}{(\beta+1)(1+\delta)}\right)^{n-1}\frac{\delta}{(\beta+1)(1+\delta)}\\
&=\frac{\beta^p}{\beta+1}+\frac{\beta(1+\delta)^{p-1}\delta}{(\beta+1)^2}\sum_{n=1}^\infty \left(\frac{(\beta(1+\delta)+1)(1+\delta)^{p-1}}{\beta+1}\right)^{n-1}.
\end{align*}
Now observe that
\begin{align*}
 \frac{(\beta(1+\delta)+1)(1+\delta)^{p-1}}{\beta+1}&=(1+\delta)^{p-1}+\frac{\beta\delta(1+\delta)^{p-1}}{\beta+1}\\
&=1+\left(p-1+\frac{\beta}{\beta+1}\right)\delta+O(\delta^2)
\end{align*}
as $\delta\to 0$. Since $\beta<p^{-1}-1$, the above expression is less than $1$ for small $\delta$.  Thus $||X_\eta||_p$ is finite. On the other hand, it follows directly from the construction that $M^-\geq -(1+\delta)^n$ on $\{\sigma=n\}$, that is, we have the pointwise bound $-\beta M^-\leq X_\eta$. This gives $||X||_p\geq ||X_\eta||_p\geq \beta ||M^-||_p$, and since $\beta<p^{-1}-1$ was arbitrary, the optimal constant in \eqref{mainin0} cannot be smaller than $p^{-1}-1$.

\subsection{Optimality of $C_p$ in \eqref{mainin0}, the case $p\in (p_0,1)$} Let $\beta$ be an arbitrary parameter smaller than $\alpha_p^{1/(1-p)}$. Fix a large positive constant $K$, an even larger integer $N$ and set $\delta=K/N$. Let $\tau_0$, $\tau_1$, $\tau_2$, $\ldots$ and $\sigma$ be as in preceding case. The main difference in comparison to the previous construction is that we put $X=(B_{\tau_N\wedge t})_{t\geq 0}$.  That is, we terminate the Brownian motion after at most $N$ stages. If $n\leq N$, then $-M^-\geq (1+\delta)^n$ and $X_\eta=\beta(1+\delta)^n$ on $\{\sigma=n\}$.  Furthermore, 
$ -M^-=|X_\eta|=(1+\delta)^{N+1}$ on $\{\sigma>N\}$. Consequently, using \eqref{s00}, \eqref{s0m} and \eqref{s0n}, we derive that
\begin{align*}
 ||M^-||_p^p&\leq \frac{\beta \delta(1+\delta)^{p-1}}{(\beta+1)^2} \sum_{n=0}^N\left(\frac{(\beta(1+\delta)+1)(1+\delta)^{p-1}}{\beta+1}\right)^{n-1}\\
 &\quad +\frac{\beta(1+\delta)^{(N+1)p}}{\beta+1}\left(\frac{\beta(1+\delta)+1}{(\beta+1)(\delta+1)}\right)^N\\
 &=\frac{\beta\delta(1+\delta)^{p-1}}{\beta+1}\cdot \frac{1-\left(\frac{(\beta(1+\delta)+1)(1+\delta)^{p-1}}{\beta+1}\right)^N}{(\beta+1)(1-(1+\delta)^{p-1}) -\beta\delta(1+\delta)^{p-1}}\\
 &\quad +\frac{\beta(1+\delta)^p}{\beta+1}\left(\frac{(\beta(1+\delta)+1)(1+\delta)^{p-1}}{\beta+1}\right)^N.
\end{align*}
A similar computation shows that
\begin{align*}
 ||X_\eta||_p^p&=\beta^p\cdot \frac{\beta\delta(1+\delta)^{p-1}}{\beta+1}\cdot \frac{1-\left(\frac{(\beta(1+\delta)+1)(1+\delta)^{p-1}}{\beta+1}\right)^N}{(\beta+1)(1-(1+\delta)^{p-1}) -\beta\delta(1+\delta)^{p-1}}\\
 &\quad +\frac{\beta(1+\delta)^p}{\beta+1}\left(\frac{(\beta(1+\delta)+1)(1+\delta)^{p-1}}{\beta+1}\right)^N.
\end{align*}
Now let $N$ go to infinity (then $\delta=K/N$ converges to $0$). The upper bound for $||M^-||_p^p$ converges to
$$ \frac{\beta}{\beta+1}\frac{1-e^{pK-K/(\beta+1)}}{1-p-\beta_p}+\frac{\beta}{\beta+1}e^{pK-K/(\beta+1)},$$
while $||X_\eta||_p^p$ tends to
$$ \beta^p\cdot \frac{\beta}{\beta+1}\frac{1-e^{pK-K/(\beta+1)}}{1-p-\beta_p}+\frac{\beta}{\beta+1}e^{pK-K/(\beta+1)}.$$
Consequently, the optimal constant in \eqref{mainin0} cannot be smaller than
$$ \frac{\beta^p(1-e^{pK-K/(\beta+1)})/(1-p-\beta_p)+e^{pK-K/(\beta+1)}}{(1-e^{pK-K/(\beta+1)})/(1-p-\beta_p)+e^{pK-K/(\beta+1)}},$$
for any $K$. Letting $K\to \infty$, we easily see that the expression above converges to
$$ \frac{\beta^p+\beta p+p-1}{(\beta+1)p}=1+\frac{\beta^p-1}{(\beta+1)p}.$$
However, recall that $\beta$ was an arbitrary positive constant smaller than $\alpha_p^{1/(1-p)}$. Letting $\beta\to \alpha_p^{1/(1-p)}$ and using the definitions of $\alpha_p$ and $C_p$, we see that the expression above converges to $C_p^p$. This proves the sharpness of the estimate \eqref{mainin0}.

\subsection{Optimality of $c_p$ in \eqref{mainin}} Here the optimal stopping procedure will be more complicated. 
Let $\beta$ be a given positive number smaller than $p^{-1}-1$ and let $\delta>0$. Define the stopping times $\tau_0$, $\tau_1$, $\tau_2$, $\ldots$ and the variable $\sigma$ with the use of the same formulas as in the preceding cases. We will also need an additional stopping time $\eta$  given as follows: if $\sigma=n$ (and hence $B_{\tau_n}=\beta(1+\delta)^n$), then put 
$$ \eta=\inf\{t:B_t=-(1+\delta)^n\}.$$
We easily check that $\eta$ is a stopping time which is finite almost surely. Put $X=(B_{\eta\wedge t})_{t\geq 0}$ and let us compute the norms $||M^+||_p$ and $||M^-||_p$. By the above construction, we see that $M^-_{\eta}=-(1+\delta)^n$ on $\{\sigma=n\}$, so $\E (-M_{\eta}^-)^p1_{\{\sigma=n\}}=(1+\delta)^{np}\mathbb{P}(\sigma=n)$. Therefore, by \eqref{s00} and \eqref{s0n}, we have 
\begin{align*}
 \E (-M_{\eta}^-)^p&=\sum_{n=0}^\infty (1+\delta)^{np}\mathbb{P}(\sigma=n)\\
 &=\frac{\beta}{\beta+1}+\sum_{n=1}^\infty (1+\delta)^{np}\frac{\beta}{\beta+1}\left(\frac{\beta(1+\delta)+1}{(\beta+1)(\delta+1)}\right)^{n-1}\frac{\delta}{(\beta+1)(\delta+1)}\\
 &=\frac{\beta}{\beta+1}\left\{1+\frac{\delta}{\beta(1+\delta)+1}\sum_{n=1}^\infty \left[\frac{(1+\delta)^{p-1}(\beta(1+\delta)+1)}{\beta+1}\right]^n \right\}\\
 &=\frac{\beta}{\beta+1}\left\{1+ \frac{\delta(1+\delta)^{p-1}}{(\beta+1)\left(1-(1+\delta)^{p-1}(1+\frac{\beta\delta}{\beta+1})\right)}\right\}.
\end{align*}
Now, if we let $\delta$ go to $0$, we see that
\begin{equation}\label{seeabove}
 \E (-M_{\eta}^-)^p\to \frac{\beta}{\beta+1}\left\{1+\frac{1}{(\beta+1)(1-p-\frac{\beta}{\beta+1})}\right\}=\frac{\beta(2-p-\beta p)}{(\beta+1)(1-p-\beta p)}.
\end{equation}
The analysis of $\E (M_{\eta}^+)^p$ is slightly more complicated. Suppose that $\sigma=n$.  Then $B_{\tau_n}=\beta(1+\delta)^n$ and, using elementary properties of Brownian motion, we see that for each $y>\beta(1+\delta)^n$,
\begin{align*}
 \mathbb{P}(M_\eta^+\geq y|\sigma=n)&=\mathbb{P}(B\mbox{ reaches }y\mbox{ before it reaches }-(1+\delta)^n)\;|\;\sigma=n)\\
&=\frac{(\beta+1)(1+\delta)^n}{y+(1+\delta)^n}
\end{align*}
and hence the density of $M_\eta^+$, given that $\sigma=n$, is equal to 
$$ g(s)=\frac{(\beta+1)(1+\delta)^n}{(s+(1+\delta)^n)^2},\qquad s>\beta(1+\delta)^n.$$
Consequently,
$$ \E \left[(M_\eta^+)^p|\sigma=n\right]=\int_{\beta(1+\delta)^n}^\infty \frac{s^p(\beta+1)(1+\delta)^n}{(s+(1+\delta)^n)^2}\mbox{d}s=(\beta+1)(1+\delta)^{pn}
\int_\beta^\infty \frac{s^p}{(s+1)^2}\mbox{d}s$$
and hence by \eqref{s0n} we obtain, after some straightforward manipulations,
\begin{align*}
\E (M_\eta^+)^p&\geq \frac{\beta(1+\delta)^{p-1}\delta}{\beta+1}\int_\beta^\infty \frac{s^p}{(s+1)^2}\mbox{d}s \cdot \sum_{n=1}^\infty \left(\frac{(\beta(1+\delta)+1)(1+\delta)^{p-1}}{\beta+1}\right)^{n-1}\\
&= \frac{\beta(1+\delta)^{p-1}\delta}{\beta+1}\int_\beta^\infty \frac{s^p}{(s+1)^2}\mbox{d}s \cdot \frac{1}{1-(1+\delta)^{p-1}(1+\frac{\beta}{\beta+1}\delta)}.
\end{align*}
When $\delta$ goes to $0$, the latter expression converges to
$$ \int_\beta^\infty \frac{s^p}{(s+1)^2}\mbox{d}s \cdot \frac{\beta}{1-p-\beta p}.$$
Putting all the above facts together, we see that
$$ \liminf_{\delta \to 0} \frac{\E (M_\eta^+)^p}{\E (-M_{\eta}^-)^p}\geq \frac{(\beta+1) \int_\beta^\infty \frac{s^p}{(s+1)^2}\mbox{d}s}{2-p-\beta p}.$$
However, $\beta$ was an arbitrary positive number smaller than $1/p-1$. If we let $\beta$ go to $1/p-1$ in the above expression on the right, we see that the optimal constant in \eqref{mainin} cannot be smaller than
$$ \left(\frac{1}{p}\int_{p^{-1}-1}^\infty \frac{s^p}{(s+1)^2}\mbox{d}s\right)^{1/p}.$$
This is precisely $c_p$, which can be easily verified with the use of integration by parts.

\subsection{Optimality of $\mathfrak{c}_p$ in \eqref{mainin2}} If $0<p\leq 1/2$, then the sharpness of \eqref{mainin2} follows at once from \S3.3, since \eqref{mainin2} is stronger than \eqref{mainin}. Therefore it is enough to study the case $1/2<p<1$ only. The calculations are very similar to those in the preceding section; however, some small but nontrivial changes are required, so we have decided to present the details. Let $\tau_0$, $\tau_1$, $\tau_2$, $\ldots$, $\sigma$, $\eta$ be defined by the same formulas (for some fixed $\beta<p^{-1}-1$) and put $X=(B_{\eta\wedge t})_{t\geq 0}$. Now, for a given integer $n$ and $0<y\leq (1+\delta)^n$, we see that
$$ \mathbb{P}\big(M_\eta\geq y|\sigma=n\big)=1,$$
since $M_{\eta}^-=-(1+\delta)^n$ on the set $\{\sigma=n\}$. For $y>(1+\delta)^n$ we have, as previously,
$$ \mathbb{P}\big(M_\eta\geq y|\sigma=n\big)=\frac{(\beta+1)(1+\delta)^n}{y+(1+\delta)^n}.$$
Consequently, we derive that the conditional $p$-th moment of $M_\eta$ is equal to
\begin{align*}
 &\E [M_\eta^p |\sigma=n]\\
&\qquad \qquad =(1+\delta)^{np}\cdot \frac{(1+\delta)^n(1-\beta)}{2(1+\delta)^n}+\int_{(1+\delta)^n}^\infty \frac{s^p(\beta+1)(1+\delta)^n}{(s+(1+\delta)^n)^2}\mbox{d}s\\
&\qquad \qquad =\frac{1-\beta}{2}(1+\delta)^{np}+(\beta+1)(1+\delta)^{np}\int_1^\infty \frac{s^p}{(s+1)^2}\mbox{d}s.
\end{align*}
Therefore, by \eqref{s0n},
\begin{align*}
& \E M_\eta^p\\
&\geq \sum_{n=1}^\infty \left[\frac{1-\beta}{2}+(\beta+1)\int_1^\infty \frac{s^p}{(s+1)^2}\mbox{d}s\right]
\frac{\beta \delta(1+\delta)^{np-1}}{(\beta+1)^2}\left(\frac{\beta(1+\delta)+1}{(\beta+1)(1+\delta)}\right)^{n-1}\\
&=\left[\frac{1-\beta}{2}+(\beta+1)\int_1^\infty \frac{s^p}{(s+1)^2}\mbox{d}s\right]\frac{\beta(1+\delta)^{1-p}}{(\beta+1)\big((\beta+1)(1+\delta)^{1-p}-\beta(1+\delta)-1\big)},
\end{align*}
where the geometric series converges due to the assumption $\beta<p^{-1}-1$. 
Letting $\delta\to 0$, we see that the latter expression converges to
$$ \frac{\beta}{(\beta+1)(1-p-\beta p)}\left[\frac{1-\beta}{2}+(\beta+1)\int_1^\infty \frac{s^p}{(s+1)^2}\mbox{d}s\right].$$
Thus, we infer from \eqref{seeabove} that the upper bound for the ratio 
$ \E M_\eta^p/\E (-M^-_{\eta})^p$ cannot be smaller than
$$ \frac{\frac{\beta}{(\beta+1)(1-p-\beta p)}\left[\frac{1-\beta}{2}+(\beta+1)\int_1^\infty \frac{s^p}{(s+1)^2}\mbox{d}s\right]}{\frac{\beta(2-p-\beta p)}{(\beta+1)(1-p-\beta p)}}=\frac{\frac{1-\beta}{2}+(\beta+1)\int_1^\infty \frac{s^p}{(s+1)^2}\mbox{d}s}{2-p-\beta p}.$$
It suffices to note that the latter expression converges to $\mathfrak{c}_p^p$ as $\beta\to p^{-1}-1$. This establishes the desired sharpness.

\section{Harmonic functions in domains of $\R^n$} \label{applications} 

In \cite{Bur1} Burkholder proves an interesting version of inequalities \eqref{burk1} and \eqref{burk2} for harmonic functions in the upper half-space $\R^{n+1}_{+}=\{(x, y): x\in \R^n, y>0\}$.  We briefly recall his result.  If $u$ is harmonic in $\R^{n+1}_{+}$, we let $N_{\alpha}(u)$ be its non-tangential maximal function defined by 
$$
N_{\alpha}(u)(x)=\sup\{|u(s, y)|: (s, y)\in \Gamma_{\alpha}(x)\},
$$
where $\Gamma_{\alpha}(x)=\{(s, y): |x-s|<\alpha y \}$ is the cone in $\R^{n+1}_{+}$ of aperture $\alpha$.  Setting $u^{-}=u\wedge 0$, we define the corresponding one-sided variant of the above object by
$$
N^{-}_{\alpha}(u)(x)=\sup\{-u^-(s, y): (s, y)\in \Gamma_{\alpha}(x)\}. 
$$
\begin{theorem}[Burkholder \cite{Bur1}]\label{Burkholder2}  Suppose $u$ is harmonic in $\R^{n+1}_{+}$ satisfying $u(0, y)=o(y^{-n})$,  as $y\to\infty$.  If $\Phi$ is as in Theorem \ref{Burkholder} then 
\begin{equation}\label{burk3}
\int_{\R^n}\Phi\big(N_{\alpha}(u)(x)\big) dx\leq C\int_{R^n} \Phi\big(-N^{-}_{\alpha}(u)(x)\big) dx,
\end{equation}
for some constant $C$ depending on $\Phi$, $n$ and $\alpha$.  In particular, 
\begin{equation}\label{burk4} 
\|N_{\alpha}(u)\|_p\leq C_{p, \alpha, n}\|N^{-}_{\alpha}(u)\|_p, \,\,\,\,\,\, 0<p<1. 
\end{equation}
\end{theorem}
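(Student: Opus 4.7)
The plan is to transfer the martingale inequality \eqref{mainin2} (the sharp refinement of Burkholder's \eqref{burk2}) to the harmonic setting via the standard Brownian-motion representation of harmonic functions on $\R^{n+1}_+$. This is the classical strategy of Burkholder--Gundy--Silverstein: lift $u$ to a continuous martingale, apply the martingale inequality, and then compare the path functionals of the martingale to the one-sided non-tangential maximal functions.

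\textbf{Step 1 — probabilistic lifting.} Fix a large parameter $R$ and let $B_t=(\beta_t,Y_t)$ be an $(n+1)$-dimensional Brownian motion started at $z_R:=(0,R)\in \R^{n+1}_+$. Let $\tau_R$ be the first hitting time of $\R^n\times\{0\}$. Under the decay hypothesis $u(0,y)=o(y^{-n})$, the process $X^R_t:=u(B_{t\wedge\tau_R})-u(z_R)$ is a continuous (hence only-nonnegative-jump) martingale starting from $0$. Applying Theorem~\ref{theorem3} to $X^R$ gives
\begin{equation*}
\E\bigl[\sup_{t\leq \tau_R}\lvert u(B_t)-u(z_R)\rvert^p\bigr]\leq \mathfrak{c}_p^p\,\E\bigl[\sup_{t\leq \tau_R}(u(z_R)-u(B_t))_+^p\bigr].
\end{equation*}

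\textbf{Step 2 — comparison with non-tangential maxima.} The core classical fact is that, conditionally on the Brownian path visiting a point $(s,y)\in\R^{n+1}_+$, the exit point $\beta_{\tau_R}$ lies in the $n$-ball $\{x:\lvert x-s\rvert<\alpha y\}$ with probability at least $p_0(\alpha,n)>0$; this is a harmonic measure estimate in a half-space, proved via the strong Markov property. Using the first-passage time $T_\lambda:=\inf\{t:\lvert u(B_t)\rvert>\lambda\}$ and this estimate, one deduces
\begin{equation*}
\mathbb{P}\bigl(N_\alpha(u)(\beta_{\tau_R})>\lambda\bigr)\leq p_0^{-1}\,\mathbb{P}\bigl(\sup_{t\leq \tau_R}\lvert u(B_t)\rvert>\lambda\bigr),
\end{equation*}
and analogously, in the reversed direction,
\begin{equation*}
\mathbb{P}\bigl(\sup_{t\leq \tau_R}(-u(B_t))_+>\lambda\bigr)\leq p_0^{-1}\,\mathbb{P}\bigl(-N^-_\alpha(u)(\beta_{\tau_R})>\lambda\bigr).
\end{equation*}
Combined with Step~1, these distributional comparisons give an $L^p$ inequality on $\R^n$ weighted by the exit distribution $\mu_R$.

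\textbf{Step 3 — passage to the limit $R\to\infty$.} The hypothesis $u(0,R)=o(R^{-n})$ makes the shift $u(z_R)$ negligible relative to the rescalings below. The exit measure $\mu_R$ has density proportional to $R(R^2+\lvert x\rvert^2)^{-(n+1)/2}$ on $\R^n$; after multiplication by $R^n$, the rescaled measure $R^n\mu_R$ converges to a constant multiple of Lebesgue measure on each bounded subset of $\R^n$. A standard monotone/dominated convergence argument then turns the probabilistic $L^p$ inequality into \eqref{burk4} with constant $C_{p,\alpha,n}=\mathfrak{c}_p\, p_0(\alpha,n)^{-1/p}$ (up to universal factors). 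For the general $\Phi$-version \eqref{burk3}, one runs the same scheme with $\Phi$ in place of $|\cdot|^p$; the doubling condition $\Phi(\beta\lambda)\leq \gamma\Phi(\lambda)$, $\beta>\gamma$, ensures the required integrability to pass the limit and to bundle the distributional inequality into an integral one.

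\textbf{Expected main obstacle.} The delicate step is Step~2: proving the half-space harmonic-measure estimate with the constant $p_0(\alpha,n)>0$ \emph{uniform in the base point} $(s,y)$ and independent of $R$. Scaling-invariance of Brownian motion reduces this to a single estimate at $(s,y)=(0,1)$, but some care is needed to choose the aperture and to track how $p_0$ depends on $\alpha$ and $n$. A secondary but real difficulty is that this comparison step is \emph{lossy}—the factor $p_0^{-1/p}$ is introduced—so the sharpness of the constant $\mathfrak{c}_p$ in Theorem~\ref{theorem3} does not transfer to sharpness of $C_{p,\alpha,n}$ in \eqref{burk4}, and indeed determining the best constant on the harmonic side is a separate (and apparently open) problem.
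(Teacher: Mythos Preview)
The paper does not contain its own proof of this statement: Theorem~\ref{Burkholder2} is quoted verbatim from Burkholder's 1975 paper and attributed to him, with no argument supplied. What the paper \emph{does} prove is the probabilistic analogue, Theorem~\ref{theomrem3}, and then the disc version (the Corollary) by combining that with the Burkholder--Gundy--Silverstein distributional comparisons \eqref{dist1}--\eqref{dist2}. Your Steps~1--3 are precisely the half-space transcription of that same route, so in spirit your proposal matches what the paper does for the disc, not anything the paper does for the half-space itself.

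There is, however, a genuine gap in your handling of the general $\Phi$-inequality \eqref{burk3}. In Step~3 you write that ``one runs the same scheme with $\Phi$ in place of $|\cdot|^p$,'' but Step~1 invokes Theorem~\ref{theorem3}, which is proved in this paper only for the functionals $|\cdot|^p$, $0<p<1$, via the explicit special functions of Section~2. No $\Phi$-moderate-growth version of \eqref{mainin2} is established here, and the special-function method does not obviously extend to an arbitrary $\Phi$ satisfying $\Phi(\beta\lambda)\le\gamma\Phi(\lambda)$. Burkholder's original proof of \eqref{burk1} and \eqref{burk3} goes through good-$\lambda$ inequalities precisely because those \emph{do} integrate against any such $\Phi$; your route, as written, recovers \eqref{burk4} but not \eqref{burk3}. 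To close the gap you would need either to supply a good-$\lambda$ inequality for the pair $(M,M^-)$ at the martingale level, or to cite Burkholder's \eqref{burk1} directly for Step~1 (at which point you are no longer using this paper's results for the $\Phi$-case).

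A minor additional point: the harmonic-measure comparison in Step~2, in the half-space and with Lebesgue measure rather than exit distribution on the boundary, is more delicate than you indicate. The paper sidesteps this entirely for the half-space and only carries it out on the disc, where the exit law from the center \emph{is} normalized Lebesgue measure; your rescaling-and-limit argument in Step~3 is plausible but would need to be made precise, and the paper offers no help there.
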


It is shown in \cite[p.451]{Bur1} that this inequality fails for $p\geq 1$.  

A similar result holds for harmonic functions in the ball of $\R^n$ with the normalization $u(0)=0$.  Using Theorem \ref{theorem3} and the classical fact that the composition of a superharmonic function with a Brownian motion is a supermartingale (see Doob \cite{Do}), we obtain the following probabilistic version of Burkholder's result.

\begin{theorem}\label{theomrem3}  Let $D\subset \R^n$ be a domain (an open connected set).  Fix a point $x_0\in D$ and let $B=(B_t)_{t\geq 0}$ be Brownian motion starting at $x_0$ and killed upon leaving $D$.  Denote by $\tau_D$ its exit time from $D$. Assume further that $u$ is a superharmonic function in $D$ satisfying the normalization condition $u(x_0)=0$.  Define $M(u)=\sup_{t\geq 0}|u(B_{t\wedge \tau_D})|$ and  $M^{-}(u)=\inf_{t\geq 0}u^{-}(B_{t\wedge \tau_D})$.  Then 
\begin{equation}\label{harmonic-prob} 
\|M(u)\|_p\leq \mathfrak{c}_p\|M^{-}(u)\|_p, \,\,\,\,\,\, 0<p<1,  
\end{equation}
where $\mathfrak{c}_p$ is the constant in Theorem \ref{theorem3}. 
\end{theorem}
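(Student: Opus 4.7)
The plan is to reduce the claim to Theorem \ref{theorem3} applied to the supermartingale obtained by composing $u$ with the Brownian motion. First I would set $X_t = u(B_{t\wedge \tau_D})$; then $X_0 = u(x_0) = 0$, and the classical fact (Doob \cite{Do}) that the composition of a superharmonic function with Brownian motion is a supermartingale, combined with optional stopping at $\tau_D$, shows that $X=(X_t)_{t\geq 0}$ is an adapted supermartingale on the underlying probability space.

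Second, I would verify that $X$ has only nonnegative jumps in the sense required by Theorem \ref{theorem3}. When $u$ is continuous --- in particular when $u$ is harmonic --- the path $t\mapsto u(B_{t\wedge \tau_D})$ is continuous, so the jump hypothesis is vacuous. In the general lower semicontinuous superharmonic case one passes to the right-continuous modification of $X$: the set of discontinuities of such a $u$ is polar for Brownian motion, so $X$ is almost surely continuous after modification and again has no jumps. Alternatively, one may approximate $u$ monotonically from below by continuous superharmonic functions $u_n$, apply the already-settled continuous case to each $u_n$, and let $n\to\infty$ using monotone convergence.

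Third, matching the notation is routine. With $M^-_t = \inf_{0\leq s\leq t}X_s\wedge 0$ and $M_t = \sup_{0\leq s\leq t}|X_s|$ as in Section 1, monotone convergence yields
$$
\|M\|_p = \Bigl(\E\bigl(\sup_{t\geq 0}|X_t|\bigr)^p\Bigr)^{1/p} = \|M(u)\|_p \quad\text{and}\quad \|M^-\|_p = \Bigl(\E\bigl(-\inf_{t\geq 0}(X_t\wedge 0)\bigr)^p\Bigr)^{1/p} = \|M^-(u)\|_p,
$$
where in the second identity I use that $u^-(B_{t\wedge\tau_D}) = X_t\wedge 0$. Applying Theorem \ref{theorem3} to $X$ then delivers \eqref{harmonic-prob} with the sharp constant $\mathfrak{c}_p$. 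The only genuinely nontrivial input beyond Theorem \ref{theorem3} is the regularity argument of Step 2 in the discontinuous superharmonic case; if one restricts the statement to continuous superharmonic (in particular harmonic) $u$, the reduction is entirely formal.
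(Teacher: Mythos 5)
Your proposal is correct and is the same argument the paper gives: compose $u$ with Brownian motion, invoke Doob's classical result that the composition is a supermartingale, and apply Theorem \ref{theorem3}. The paper disposes of this in a single sentence of citation; your extra care about the continuity of $t\mapsto u(B_{t\wedge\tau_D})$ (which renders the nonnegative-jump hypothesis vacuous) fills in a detail the paper leaves implicit.
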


This inequality has an interesting application for harmonic functions in the unit disc  $\D=\{z\in \bC: |z|<1\}$ in the plane.   Suppose $u$ is harmonic in $\D$ and, as in the upper half-space, define $N_{\alpha}(u)(e^{i\theta})$ and 
$N^{-}_{\alpha}(u)(e^{i\theta})$ where this time the supremum is taken over the Stoltz domain given by the interior of the smallest convex set containing the disc $\{z\in \bC: |z|<\alpha\}$ and the point $e^{i\theta}$. (Here, we assume $0<\alpha<1$.) It is proved in Burkholder, Gundy and Silverstein \cite{BurGunSil} that there exists a  constant $k_{\alpha}$  depending only on $\alpha$ such that
\begin{equation}\label{dist1}
 m\{\theta: N_{\alpha}(u)(e^{i\theta})>\lambda\}\leq k_{\alpha}\mathbb{P}\big(M(u)>\lambda\big), 
\end{equation}
for all $\lambda>0$.  Here $m$ denotes the Lebesgue measure on the circle.  While the opposite inequality is stated in \cite{BurGunSil} for harmonic functions, it actually holds for subharmonic functions (see also Durrett \cite[p.137]{Dur}) and we have that there exists a constant $K_{\alpha}$ (again depending only on $\alpha$) such that
\begin{equation}\label{dist2}
 \mathbb{P}\big(-M^{-}(u)>\lambda\big)\leq K_{\alpha}m\{\theta: N^{-}_{\alpha}u(e^{i\theta})>\lambda\}.
\end{equation} 
Combining \eqref{harmonic-prob}, \eqref{dist1} and \eqref{dist2} we obtain 

\begin{corollary} Let $u$ be a harmonic function in the unit disk $D$ with $u(0)=0$. Then 
\begin{equation}\label{disc1} 
\|N_{\alpha}(u)\|_p\leq k_{\alpha}K_{\alpha}\mathfrak{c}_p\|N^{-}_{\alpha}(u)\|_p, \,\,\,\,\,\, 0<p<1,  
\end{equation} 
where the constants $k_{\alpha}$, $K_{\alpha}$ and $\mathfrak{c}_p$ are, respectively,   those appearing in \eqref{dist1}, \eqref{dist2} and Theorem  \ref{theorem3}.  In particular, the dependence on $p$ in the harmonic function inequality \eqref{disc1}  is the same as in the martingale inequality \eqref{mainin2}.  A similar inequality holds for harmonic functions on the upper half-space $\R^{2}_{+}$ satisfying the hypothesis of Theorem \ref{Burkholder2}. 
\end{corollary}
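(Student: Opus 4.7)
The argument is a three-step chain: translate from boundary cones to Brownian paths on one side, apply the probabilistic inequality \eqref{harmonic-prob}, and translate back on the other side. All three ingredients are already available, so the task is essentially to combine them correctly via the layer-cake formula.

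First, since $u$ is harmonic in $\D$, it is in particular superharmonic and satisfies $u(0)=0$, so Theorem~\ref{theomrem3} applies verbatim with $D=\D$ and $x_0=0$. Writing $M(u)=\sup_{t\geq 0}|u(B_{t\wedge\tau_{\D}})|$ and $M^-(u)=\inf_{t\geq 0} u^-(B_{t\wedge\tau_{\D}})$, this gives
$$\mathbb{E}\bigl[M(u)^p\bigr]\;\leq\;\mathfrak{c}_p^{\,p}\,\mathbb{E}\bigl[(-M^-(u))^p\bigr].$$
Here I would be careful to note that the law of $B_{\tau_\D}$ is normalized Lebesgue measure $m$ on the circle (this is the classical Poisson kernel fact), which is the link between the probabilistic side and the boundary measure side.

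Second, I would use the layer-cake formula together with \eqref{dist1} and \eqref{dist2}. Applying \eqref{dist1},
$$\|N_\alpha(u)\|_p^{\,p}=p\!\int_0^\infty\!\lambda^{p-1}\,m\{N_\alpha(u)>\lambda\}\,d\lambda\;\leq\;k_\alpha\,p\!\int_0^\infty\!\lambda^{p-1}\,\mathbb{P}(M(u)>\lambda)\,d\lambda\;=\;k_\alpha\,\mathbb{E}\bigl[M(u)^p\bigr],$$
and symmetrically, by \eqref{dist2},
$$\mathbb{E}\bigl[(-M^-(u))^p\bigr]\;\leq\;K_\alpha\,\|N^-_\alpha(u)\|_p^{\,p}.$$
Chaining the three displays yields
$$\|N_\alpha(u)\|_p^{\,p}\;\leq\;k_\alpha\,\mathfrak{c}_p^{\,p}\,K_\alpha\,\|N^-_\alpha(u)\|_p^{\,p},$$
and taking $p$-th roots gives the corollary, with all the $p$-dependence concentrated in the factor $\mathfrak{c}_p$ (the constants $k_\alpha,K_\alpha$ get raised to $1/p$ but remain $\alpha$-dependent only). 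The half-plane statement is proved identically, replacing $\D$ by $\R^2_+$ and invoking the analogous cone-vs-Brownian distribution comparisons, which are contained in the same references \cite{BurGunSil} and \cite{Dur}.

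The calculation is mechanical; the only real subtlety is an accounting one, namely checking that the $M(u)$ and $M^-(u)$ appearing in \eqref{dist1} and \eqref{dist2} are exactly those defined in Theorem~\ref{theomrem3} (same base point $0$, same stopping at $\tau_\D$). Since \eqref{dist1} is a purely \emph{subharmonic} estimate and $|u|$ is subharmonic whenever $u$ is harmonic, no sign problem arises; and \eqref{dist2} is applied to $-u$ (or to $u^-$), which is again subharmonic. Once these identifications are made, no further estimation is needed.
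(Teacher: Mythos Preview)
Your approach is exactly the paper's: the corollary is introduced with the single sentence ``Combining \eqref{harmonic-prob}, \eqref{dist1} and \eqref{dist2} we obtain'', and your layer-cake chaining is precisely that combination written out. Your parenthetical observation is also on target---the argument actually delivers $(k_\alpha K_\alpha)^{1/p}\mathfrak{c}_p$ rather than the stated $k_\alpha K_\alpha \mathfrak{c}_p$, so the $\alpha$-constants as literally written do acquire $p$-dependence through the exponent; this is a minor imprecision in the paper's formulation rather than a flaw in your reasoning.
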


\end{document}